\newtheorem{theorem}{Theorem}[section]
\newtheorem{lemma}{Lemma}[section]
\theoremstyle{remark}
\newtheorem{remark}{Remark}[section]
\title{A \dong{new} conservative discontinuous Galerkin method via implicit penalization for the generalized KdV equation
}
\author{Yanlai Chen\thanks{Department of
		Mathematics, University of Massachusetts Dartmouth, North Dartmouth, MA 02747, USA. The work of this author was partially supported by a grant from the College of Arts \& Sciences at the University of Massachusetts Dartmouth, and by the UMass Dartmouth Marine and UnderSea Technology (MUST) Research Program made possible via an Office of Naval Research grant N00014-20-1-2849. Email: {\tt yanlai.chen@umassd.edu}.
	    }
	\and Bo Dong\thanks{Department of
		Mathematics, University of Massachusetts Dartmouth, North Dartmouth, MA 02747, USA. The work of this author was supported by the National Science Foundation (grant DMS-1818998). Email: {\tt bdong@umassd.edu}.}
	\and Rebecca\thanks{Department of
		Mathematics, University of Massachusetts Dartmouth, North Dartmouth, MA 02747, USA. The work of this author was supported by the National Science Foundation (grant DMS-1818998). Email: {\tt rpereira2@umassd.edu}.}}
\theoremstyle{definition}
\theoremstyle{remark}
\numberwithin{equation}{section}
\definecolor{blck}{rgb}{0,0,0}
\definecolor{darkred}{rgb}{.6,.1,0}
\definecolor{blue}{rgb}{0,0,1}
\definecolor{red}{rgb}{1,0,0}
\newcommand\bo[1]{\textcolor{blck}{#1}}
\newcommand\dong[1]{\textcolor{blck}{#1}}
\newcommand{\jmp}[1]{[\![#1]\!]}                     
\newcommand{\Eh}{\mathscr{E}_h}
\def\calT{{\mathcal T}}
\theoremstyle{definition}
\numberwithin{equation}{section}
\def\calT{{\mathcal T}}
\newcommand{\eps}{\varepsilon}
\newlist{todolist}{itemize}{2}
\setlist[todolist]{label=$\square$}
\begin{document}

\date{}


\maketitle

\begin{abstract}
	We design, analyze, and implement a \dong{new} conservative Discontinuous Galerkin (DG) method for the simulation of  solitary wave solutions to the generalized Korteweg-de Vries (KdV) Equation.  The key feature of our method is the conservation, at the numerical level, of the mass, energy and Hamiltonian that are conserved by exact solutions of all KdV equations. To our knowledge, this is the first DG method that conserves all these three quantities, a property critical for the accurate long-time evolution of solitary waves. To achieve the desired conservation properties, our novel idea is to introduce two stabilization parameters in the numerical fluxes as new unknowns which then allow us to enforce the conservation of energy and Hamiltonian in the formulation of the numerical scheme. We prove the conservation properties of the scheme which are corroborated by numerical tests. This idea of achieving conservation properties by implicitly defining penalization parameters, that are traditionally specified {\em a priori}, can serve as a framework for designing physics-preserving
	numerical methods for other types of problems.
\end{abstract}



\section{Introduction}
In this paper, we consider the following generalized Korteweg-de Vries (KdV) equation
\begin{equation}\label{eq:Kdv_Equation}
    u_{t} +\varepsilon u_{xxx} + f(u)_{x} = g(x,t), \qquad x\in\Omega=[a, b],\, t>0
\end{equation}
 with periodic boundary conditions 
 and the initial condition $u(x,0)=u_{0}(x)$. Here, $f(u)$ is usually some polynomial of $u$. When \bo{$\varepsilon=1$,} $f(u)=3u^2$ and $g \equiv 0$, \eqref{eq:Kdv_Equation} represents the \bo{original} KdV equation.
 
KdV equations are widely adopted to model one-dimensional long waves and have applications in plasma physics, biology, nonlinear optics, quantum mechanics, and fluid mechanics; \bo{see \cite{HammackSegur74,HammackSegur78,Osborne90, GardnerMorikawa60,vanWijngaarden72,Kluwick83,HelfrichWhitehead89}}. There is also a lot of interest in theoretical studies on the mathematical properties of solutions to KdV equations. Many modern areas of mathematics and theoretical physics opened up thanks to the basic research into the KdV equations. As a consequence, there have been intense efforts on developing 
numerical methods for KdV equations, including  finite difference methods  \cite{Vliegenthart71,Goda77,LiVisbal06}, finite element methods  \cite{Winther80,ArnoldWinther82,SANZSERNA1981,BakerDougalisKarakashian83}, spectral
methods   \cite{FornbergWhitham78,HuangSloan92,GuoShen01,MaSun01} and operator slitting methods \cite{Holden1999,Tao11}.

KdV equations feature a combination of the nonlinear term and the dispersive term $u_{xxx}$, which makes it difficult to  achieve numerical properties such as stability and convergence. Moreover, it is known that KdV equations may have ``blow-up" solutions but the mechanism of the singularity formation is not clear  \cite{Merle01,MartelMerle02}. The study in \cite{BonaDougalisKarakashianMcKinney90} showed that the simulation of blow-up solutions, almost for sure, will require highly nonuniform meshes. This makes Discontinuous Galerkin (DG) methods suitable for solving KdV equations  due to their advantages including high-order accuracy, compact stencil, capability of handling nonuniform meshes and variable degrees,  and flexibility in constructing the numerical fluxes to achieve conservation of particular physical quantities. 
DG methods  \cite{shu2009discontinuous, YanShu02, xu2005local, ChengShu08, XuShu12, HuffordXing14, chen2016new} have been developed for KdV type equations.
In particular, there have been continuous efforts on developing DG methods that conserve physically  interesting quantities of their solutions. Indeed, all KdV equations have three such quantities:
$$ \textrm{Mass: } \int_\Omega u dx, \qquad \textrm {Energy:} \int_\Omega u^2 dx, \qquad \textrm {Hamiltonian:} \int_\Omega (\frac{\varepsilon}{2}u_x^2-V(u)) dx,$$where $V(\cdot)$ is an anti-derivative of $f(\cdot)$. 
This property is \bo{crucial} for their solitary wave solutions to maintain amplitude, shape, and speed even after colliding with another such wave. 
Numerical results \cite{bona2013conservative,KarakashianXing16,liu2016hamiltonian,zhang2019conservative}  showed that DG methods preserving these invariants  can maintain numerical stability over a long time period and help reduce phase and shape error after long time integration. However, existing conservative DG methods cannot conserve the energy and Hamiltonian simultaneously though the conservation of mass is easy to achieve. In Table \ref{tab:WhoConserve} we list some conservative DG methods for KdV equations. This is in no way an exhaustive list, but it shows the trend and main efforts in the development of conservative DG methods for KdV equations. 
We can see that the methods in \cite{bona2013conservative,yi2013direct, KarakashianXing16,chen2016new} and the first method in \cite{zhang2019conservative} conserve the energy but not the Hamiltonian, while the method in \cite{liu2016hamiltonian} and the second method in \cite{zhang2019conservative} conserve the Hamiltonian but not the energy.
\begin{table}[h!]
\centering
     \begin{tabular}{|p{6.5cm}|p{1cm}|p{2.0cm}|p{1.5cm}|} 
    \hline
\multicolumn{1}{|c|}{Method} & Year & Hamiltonian & Energy  \\
    \hline 
{Conservative DG for the Generalized KdV (GKdV) \cite{bona2013conservative}}  &  \multicolumn{1}{c|}{\multirow{2}{*}{2013}} &  \multicolumn{1}{c|}{\multirow{2}{*}{\ding{55}}}  &   \multicolumn{1}{c|}{\multirow{2}{*}{\ding{51}}} \\
    \hline
{Direct DG for GKdV \cite{yi2013direct}} & \multicolumn{1}{c|}{\multirow{1}{*}{2013}}  & \multicolumn{1}{c|}{\multirow{1}{*}{\ding{55}}}   &   \multicolumn{1}{c|}{\multirow{1}{*}{\ding{51}}} \\
   \hline
{Conservative LDG for GKdV \cite{KarakashianXing16}}
   & \multicolumn{1}{c|}{\multirow{1}{*}{2016}}   & \multicolumn{1}{c|}{\multirow{1}{*}{\ding{55}}}   &   \multicolumn{1}{c|}{\multirow{1}{*}{\ding{51}}}  \\
\hline 
{$H^2$-Conservative DG for Third-Order Equations\cite{chen2016new}} & \multicolumn{1}{c|}{\multirow{2}{*}{2016}}
   &\multicolumn{1}{c|}{\multirow{2}{*}{\ding{55}}}   &   \multicolumn{1}{c|}{\multirow{2}{*}{\ding{51}}}\\
   \hline
{Hamiltonian-Preserving DG for GKdV \cite{liu2016hamiltonian}} & \multicolumn{1}{c|}{\multirow{1}{*}{2016}} &  \multicolumn{1}{c|}{\multirow{1}{*}{\ding{51}}}   &   \multicolumn{1}{c|}{\multirow{1}{*}{\ding{55}}}  \\
    \hline
{Conservative and Dissipative LDG for KdV  \cite{zhang2019conservative}, \quad Scheme I} &
   \multicolumn{1}{c|}{\multirow{2}{*}{2019}} &
   \multicolumn{1}{c|}{\multirow{2}{*}{\ding{51}}}   &   \multicolumn{1}{c|}{\multirow{2}{*}{\ding{55}}} \\
   \hline
{Conservative and Dissipative LDG for KdV \cite{zhang2019conservative}, \quad Scheme II} &
   \multicolumn{1}{c|}{\multirow{2}{*}{2019}} &
   \multicolumn{1}{c|}{\multirow{2}{*}{\ding{55}}}   &   \multicolumn{1}{c|}{\multirow{2}{*}{\ding{51}}} \\    
 \hline
    \end{tabular}\\
    \caption{\label{tab:WhoConserve} The conservation properties of the previous DG methods}
\end{table} 
Most of these conservative DG methods have an optimal convergence order for even degree polynomials and sub-optimal order for odd degree polynomials except that the Hamiltonian conserving method in \cite{zhang2019conservative} has optimal convergence order for any polynomial degrees.

In this work, we  develop a new DG method for KdV equations that conserves all three invariants: mass, energy, and Hamiltonian.
 This conservative DG method will allow us to model and simulate the soliton wave more accurately over a long time period. 
 Our novel idea on designing the method is to treat   the penalization/stabilization parameters in the numerical fluxes implicitly (i.e.\dong{,} as new unknowns), which allow  two more equations in the formulation of the DG method that explicitly enforce the conservation of energy and Hamiltonian. The stabilization parameters are solved together with the approximations of the exact solutions. 
  Due to the time-step constraint implied by the third-order spatial derivative, we use implicit time marching schemes to avoid extremely small time steps.
  Since our DG scheme for spatial discretization is conservative, in implementation we use 
  the implicit midpoint method which is conservative for time discretization. Our numerical results show that,  just like most other conservative DG methods in literature, our method has optimal convergence for the even polynomial degrees and sub-optimal convergence for the odd ones. More significantly, our method can conserve both the energy and the Hamiltonian over a long time period. 
  
\dong{
	As shown in Table \ref{tab:WhoConserve}, both standard DG and LDG methods have appeared in literature to achieve conservation.  
We choose the LDG-like framework for our method because it has three numerical traces, and thus more room for tuning for better conservation properties. We would like to point out that
	 our method has computational complexity that is only negligibly more than standard LDG. When the equation is nonlinear (which is our focus), both discretized systems are nonlinear thus needing iterative solvers. Standard LDG system has $3 N (k+1)$ equations when $N$ elements and polynomials of degree $k$ are used. Our system has $3 N (k+1) + 2$ equations due to the introduction of two new unknown (constant) parameters.} 
We would like to \dong{further} remark that our idea of enforcing conservation properties by using implicit stabilization parameters can be applied to develop new conservative methods for other types of problems that feature conservation of physical quantities. \dong{It can also be extended to preserve more invariants for the KdV equation by introducing more than two implicit stabilization parameters.} This opens the door to promising future extensions.

The rest of the paper is structured as follows: Section 2 will describe the formulation of our DG method and prove the conservation properties. Implementation of our method is briefly discussed in Section 3\bo{,} leaving further details \bo{to} the Appendix. We display numerical results on solving third-order linear and nonlinear equations and the classical KdV equation, showing the order of convergence and conservation properties we have observed in our numerical experiments in Section 4. Finally, we end with concluding remarks in Section 5.

\section{Main Results}
\label{sec:mainresults}
In this section, we discuss our main results. We start by introducing our notations. Next, we describe our DG method and discuss the choice of penalization parameters that ensure the conservation of the Hamiltonian and energy. After that, we prove that our numerical solutions do conserve the three invariants: mass, energy, and Hamiltonian. 

\subsection{Notation} To define our DG method, first let us introduce some notations.
We partition the domain $\Omega = (a,b)$ as 
\[
\calT_h=\{I_i:=(x_{i-1}, x_i): a=x_0<x_1<\cdots<x_{N-1}<x_N=b\}.
\]
We use $\partial\calT_h:=\{ \partial I_i: i=1,\dots,N\}$ to denote the set of all element boundaries, and $\Eh:=\{x_i\}_{i=0}^N$ to denote all the nodes.
We also set $h_i = x_i - x_{i-1}$ and $h:=\max_{1\le i\le N} h_i$.

For any function $\zeta\in L^2(\partial\calT_h)$, we denote
its values on $\partial I_i:=\{x^+_{i-1}, x^-_i\}$ by $\zeta(x_{i-1}^+)$ (or simply $\zeta^+_{i-1}$) and $\zeta(x_i^-)$ (or simply $\zeta^-_i$).
Note that $\zeta(x_{i}^+)$ does not have to be equal to $\zeta(x_i^-)$.
In contrast, for any function $\eta\in L^2(\Eh)$, its value at $x_i$, $\eta(x_i)$ (or simply $\eta_i$) is uniquely defined; in this case,
 $\eta(x_i^-) = \eta(x_{i}^+) = \eta(x_i)$.

We let
$$(\varphi, v):=\sum_{i=1}^{N} (\varphi,v)_{I_i}, \quad \langle \varphi, vn\rangle:=\sum_{i=1}^{N}\langle\varphi, vn\rangle_{\partial I_i},$$
where 
$$(\varphi, v)_{I_i}=\int_{I_i} \varphi v dx,  \quad \langle \varphi,v n\rangle_{\partial I_i}=\varphi(x_i^-)v(x_i^-)n(x_i^-) +\varphi(x_{i-1}^+)v(x_{i-1}^+)n(x_{i-1}^+).$$ 
Here $n$ denotes the outward unit normal to $I_i$, that is $n(x_{i-1}^+):=-1$ and $n(x_i^-):=1$.
We define the average and jump of $\varphi$ as 
$$\{\varphi\}(x_i):=\frac{1}{2}\big(\varphi(x_i^-)+\varphi(x_i^+)\big), 
\quad \jmp{\varphi}(x_i):=
\varphi(x_i^-)-\varphi(x_i^+).$$
We also define the finite element space
\begin{equation*}
{W}_h^k = \{\omega \in L^2(\mathcal{T}_h): \;\;  \omega|{{_K}} \in {P}_{k}(K) \textrm{ for any } K\in\mathcal{T}_h,\; \mbox{ and } \omega(a)=\omega(b)\},
\end{equation*}
where $P_k(D)$ is the space of piecewise polynomials of degree up to $k$ on the set $D$. Finally, the $H^s(D)$-norm is denoted by $\|\cdot\|_{s, D}$.
We drop the first subindex if $s=0$,  and the second if $D=\Omega$ or \dong{$\mathcal{T}_h$}.

\subsection{The DG method} 
\label{sec:dgm}
To define our DG method for the KdV equation \eqref{eq:Kdv_Equation},
 we first rewrite it as the following system of first-order equations
\begin{equation}\label{eq:kdvsystem}
	\begin{split}
q - u_x \,&=\, 0,   \qquad\;\,{\textrm {in} }\;  \Omega,\\
p -\eps  q_x\,&=\, f(u), \quad {\textrm {in} }\;  \Omega,\\
u_t+ p_x  \,&=\,g(x), \quad \, {\textrm {in} }\;  \Omega,
  \end{split}
\end{equation}
 with the initial condition $u(x,0)=u_0(x)$ and the periodic boundary conditions
$$u(a) = u(b), \quad q(a) = q(b),  \quad p(a) = p(b).$$

We discretize \eqref{eq:kdvsystem} by
seeking $(u_h, q_h, p_h)$ as approximations to $(u, q, p)$ in the space $\left(W_h^{k}\right)^3$  
such that 
\begin{subequations}
\label{eq:scheme_time}
\begin{alignat}{2}
\label{eq:scheme_time1}
({q}_h,{v}) + (u_h,v_x) - \langle \widehat{u}_h,{v}n \rangle  & = 0, & \forall v \in W_h^k,\\
\label{eq:scheme_time2}
({p}_h,{z})  + \eps (q_h,z_x)  - \eps \langle \widehat{q}_h,{z}n \rangle  & = (f(u_h), z), \quad & \forall z \in W_h^k,\\
\label{eq:scheme_time3}
({u}_{ht},{w})-(p_h,w_x) + \langle \widehat{p_h}, {w}n \rangle & = (g, w),& \forall w \in W_h^k.
\end{alignat}
\end{subequations}
Here, $\widehat{u_h}, \widehat{q_h}, \widehat{p_h}$ are the so-called numerical traces whose definitions are in general  critical for the accuracy and stability of the DG method \cite{ArnoldBrezziCockburnMarini02}. There are multiple ways of defining them. We adopt the one that is similar to the Local Discontinuous Galerkin (LDG) methods \cite{ArnoldBrezziCockburnMarini02} 
\begin{subequations}
\label{eq:scheme_hats}
\begin{alignat}{1}
\label{eq:uhat}
\widehat{u_h} = & \{u_h\},\\
\label{eq:qhat}
\widehat{q_h} = & \{q_h\} + \tau_{qu} \jmp{u_h},\\
\label{eq:phat}
\widehat{p_h} = & \{p_h\} + \tau_{pu} \jmp{u_h}. 
\end{alignat}
\end{subequations}
The key difference is that, instead of specifying the values of the penalty parameters $(\tau_{qu}, \tau_{pu})$ as done by LDG \cite{ArnoldBrezziCockburnMarini02}, we leave them as unknowns. It is exactly due to the resulting freedom of placing two more constraints that, 
as shown in Lemma \ref{lemma:conserv_general} in the next section, the scheme is able to conserve the mass, $L^2$-energy, and the Hamiltonian of the numerical solutions.
Toward that end, we require that the penalization parameters $\tau_{qu}$ and $\tau_{pu}$ be constants that satisfy
\begin{subequations}
	\label{eq:trace1}
	\begin{alignat}{1}
	\label{eq:tau_pu1}
	&\tau_{pu}\sum_{i=1}^N \jmp{u_h}^2(x_i) - \eps\tau_{qu}\sum_{i=1}^N \jmp{u_h} \jmp{q_h}(x_i)=\sum_{i=1}^N \Big(\jmp{V(u_h)} - \{\varPi f(u_h)\} \jmp{u_h}\Big)(x_i),\\  
	\label{eq:tau_qu1}
	&\tau_{pu}\sum_{i=1}^N \jmp{p_h} \jmp{u_h}(x_i)+\eps\tau_{qu}\sum_{i=1}^N\jmp{u_h}_t\jmp{u_h}(x_i)=0.
	\end{alignat}
\end{subequations}
Here, $V(\cdot)$ is an antiderivative of $f(\cdot)$. In summary, our method is to seek $(u_h, q_h, p_h) \in \left(W_h^{k}\right)^3$  and penalty parameters $(\tau_{qu}, \tau_{pu})$ such that \eqref{eq:scheme_time1} - \eqref{eq:scheme_time3}\bo{,}  \eqref{eq:tau_pu1}\bo{,} and \eqref{eq:tau_qu1} are satisfied.

\begin{remark}
	Here we would like to point out that our scheme is not an LDG method. To our knowledge, existing LDG methods do not conserve the energy of solutions to KdV equations. The penalty parameters in LDG methods are known constants, while in our schemes $\tau_{qu}$ and \dong{$\tau_{pu}$} are considered as new unknowns. Correspondingly we have two more equations from \eqref{eq:trace1}. 
	In fact, we can write $\tau_{qu}$ and \dong{$\tau_{pu}$} in terms of $u_h, q_h,p_h$ as
	\begin{alignat*}{1}
	\tau_{qu}&=-\frac{1}{\eps}\frac{\eta(p_h, u_h)\sum_{i=1}^{N}\Big(\jmp{V(u_h)}-\{\Pi f(u_h)\}\jmp{u_h}\Big) }{\eta(q_h, u_h)\eta(p_h, u_h)+\eta(u_{ht}, u_h)\eta(u_h, u_h)},\\
	\tau_{pu}&= -\eps\frac{\eta(u_{ht}, u_h)}{\eta(p_h,u_h)} \tau_{qu}\dong{,}
	\end{alignat*}
	where we have used the notation $\eta(w,v)=\sum_{i=1}^N\jmp{w} \jmp{v}(x_i).$
	These expressions show that our method is different from LDG Methods. 
\end{remark}

\subsection{Conservative properties}
Now we discuss the conservation properties of the schemes in the previous section. First, in the following Lemma we give general conditions for $\widehat{u_h}, \widehat{q_h}, \widehat{p_h}$ under which DG methods that satisfy  \eqref{eq:scheme_time} conserve the mass, $L^2$ energy, and Hamiltonian. Then we apply the Lemma to prove the conservation properties for the DG method defined by \eqref{eq:scheme_time}-\eqref{eq:trace1}.  \begin{lemma}\label{lemma:conserv_general}
	Suppose $(u_h, q_h, p_h)$ satisfy \eqref{eq:scheme_time} with $g=0$. \\
	(i) If $\widehat{p_h}$ is single-valued, then we have
	\label{eq:mass_conserv}
	\[
	\frac{d}{dt}\int_{\calT_h} u_h \,dx=0,
	\quad \, {\rm (mass \,-\, conservation)}.
	\]
(ii)	If $\widehat{u_h}, \widehat{q_h}, \widehat{p_h}$ are single-valued and satisfy the condition
		\begin{alignat}{1}
		\label{eq:conserv_cond1}
0=&\sum_{i=1}^N  \Big(\jmp{V(u_h)} - \{\varPi f(u_h)\} \jmp{u_h} +(\jmp{\varPi f(u_h)} - \jmp{p_h}) (\widehat{u_h} - \{u_h\})\\
\nonumber
& \quad\quad-\jmp{u_h}(\widehat{p_h} - \{p_h\}) + \varepsilon\jmp{q_h} (\widehat{q_h} - \{q_h\}) \Big)(x_i),
\end{alignat}
then we have  
\begin{alignat}{1}
\label{eq:L2_conserv}
	\frac{d}{dt}\int_{\calT_h} u_h^2 \,dx=0,
& \quad \, ({\rm energy}-{\rm conservation}).
\end{alignat}
(iii)	If $\widehat{u_h}, \widehat{q_h}, \widehat{p_h}$ are single-valued and satisfy the condition
\begin{alignat}{1}
\label{eq:conserv_cond2}
0= & \sum_{i=1}^{N} \left( \jmp{p_h} (\widehat{p_h} - \{p_h\}) + \varepsilon\jmp{q_h} (\widehat{u_h} - \{u_h\})_t +\varepsilon \jmp{u_h}_t (\widehat{q_h} - \{q_h\})\right)(x_i),
		\end{alignat}
then we have
\begin{alignat}{1}
\label{eq:Hamiltonian_conserv}
	\frac{d}{dt}\int_{\calT_h} \Big(\,\frac{\varepsilon}{2}
 q_h^2 - V(u_h)\Big) \,dx=0,
& \quad\, ({\rm Hamiltonian}-{\rm conservation}).
\end{alignat}
\end{lemma}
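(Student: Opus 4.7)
The plan is to test the DG system \eqref{eq:scheme_time1}--\eqref{eq:scheme_time3} (with $g=0$) against carefully chosen members of $W_h^k$, perform elementwise integration by parts, and repackage the resulting edge sums using the identity
$$\sum_{i=1}^N \langle a, bn\rangle_{\partial I_i} = \sum_{i=1}^N \bigl(\{a\}\jmp{b} + \jmp{a}\{b\}\bigr)(x_i)$$
under the periodic identification $x_0\equiv x_N$. Throughout, single-valuedness of $\widehat{u_h},\widehat{q_h},\widehat{p_h}$ collapses the corresponding edge sums (e.g.\ $\sum \langle \widehat{p_h}, wn\rangle = \sum \widehat{p_h}\jmp{w}$), which is precisely the mechanism that will reduce each volume identity to the hypothesis stated. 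For \textbf{(i)}, I would simply test \eqref{eq:scheme_time3} with $w\equiv 1\in W_h^k$: then $(p_h, w_x)=0$ and $\sum \widehat{p_h}\jmp{1}=0$ since $\jmp{1}=0$, so $\frac{d}{dt}\int u_h\,dx=(u_{ht},1)=0$.

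For \textbf{(ii)}, I would combine \eqref{eq:scheme_time1} with $v=p_h$ and \eqref{eq:scheme_time3} with $w=u_h$. Integration by parts on the $p_h$-$u_h$ pair collapses the mixed volume terms, yielding
$$(u_{ht}, u_h) \;=\; (q_h, p_h) - \sum_i \jmp{u_h}(\widehat{p_h}-\{p_h\})(x_i) - \sum_i \jmp{p_h}(\widehat{u_h}-\{u_h\})(x_i).$$
Next I would use \eqref{eq:scheme_time2} with $z=q_h$, where $\eps(q_h, q_{hx})$ telescopes to $\eps\sum \{q_h\}\jmp{q_h}$, to replace $(q_h,p_h)$ by $(f(u_h), q_h)+\eps\sum(\widehat{q_h}-\{q_h\})\jmp{q_h}$. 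To dispose of $(f(u_h), q_h)$ I plan to test \eqref{eq:scheme_time1} with $v=\varPi f(u_h)\in W_h^k$; using $(f(u_h),q_h)=(\varPi f(u_h),q_h)$ and the elementwise chain-rule identity $(f(u_h),u_{hx})=\sum_i\jmp{V(u_h)}(x_i)$, all volume contributions cancel, leaving exactly the integrand of \eqref{eq:conserv_cond1}.

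For \textbf{(iii)}, differentiating the Hamiltonian gives $\eps(q_h,q_{ht})-(f(u_h),u_{ht})$. My plan is to time-differentiate \eqref{eq:scheme_time1} (legitimate since $v$ is time-independent, and $(\widehat{u_h})_t$ inherits single-valuedness from $\widehat{u_h}$) and test with $v=q_h$ to express $\eps(q_h,q_{ht})$, then test \eqref{eq:scheme_time2} with $z=u_{ht}$ to express $(f(u_h),u_{ht})$, and finally test \eqref{eq:scheme_time3} with $w=p_h$ to handle the residual $(p_h,u_{ht})$. After integration by parts, the volume terms $-\eps(u_{ht},q_{hx})-\eps(q_h,u_{htx})$ merge into $-\eps\sum\langle u_{ht}q_h,n\rangle$, and single-valuedness of the three numerical traces reduces the leftover edge data to precisely the integrand of \eqref{eq:conserv_cond2}.

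The main obstacle is the nonlinear bookkeeping in \textbf{(ii)}: since $f(u_h)\notin W_h^k$ in general it cannot serve directly as a test function, so I must pivot through its $L^2$-projection $\varPi f(u_h)$, and I must verify that the chain-rule reduction $(f(u_h),u_{hx})=\sum \jmp{V(u_h)}$ really holds elementwise (using $u_{hx}\in W_h^{k-1}$ in the projection identity) so that the $V$-term emerges cleanly. Once this single-valued-trace/projection pivot is in place, the remaining manipulations in all three parts are routine telescoping and algebraic rearrangements of averages and jumps.
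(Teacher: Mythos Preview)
Your proposal is correct and follows essentially the same route as the paper: the paper likewise takes $w=1$ for (i); for (ii) it tests with $w=u_h$, $z=q_h$, and $v=-p_h+\varPi f(u_h)$ (which by linearity is exactly your two uses of \eqref{eq:scheme_time1} with $v=p_h$ and $v=\varPi f(u_h)$ combined), invoking the same projection identity $(\varPi f(u_h),u_{hx})=(f(u_h),u_{hx})=\langle V(u_h),n\rangle$; and for (iii) it time-differentiates \eqref{eq:scheme_time1} and tests with $v=\varepsilon q_h$, $z=u_{ht}$, $w=-p_h$, precisely as you outline. The only difference is presentation---the paper adds the three tested equations simultaneously rather than substituting stepwise---so there is no substantive deviation.
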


\begin{proof}
	(i) To prove the mass conservation, we just need to take $w = 1$ in \eqref{eq:scheme_time3} and use the fact that $\widehat{p_h}$ is single-valued.
	
	(ii) Next, we prove the energy-conservation, which is also called $L^2$-conservation. We take $w := u_h$, $v := -p_h + \varPi f(u_h)$, and $z := q_h$ in \eqref{eq:scheme_time} and add the three equations together to get
	\begin{alignat*}{1}
	(f(u_h), q_h) = & (u_{ht}, u_h) - (p_h, u_{hx}) + \langle \widehat{p_h}, u_h n \rangle - (u_h, p_{hx})
	+ \langle \widehat{u}_h, p_hn \rangle + \varepsilon(q_h, q_{hx}) \\
	& - \varepsilon\langle \widehat{q}_h, q_h n\rangle + (q_h, \varPi f(u_h)) + \langle u_h - \widehat{u_h}, \varPi f(u_h) n\rangle - (\varPi f(u_h), u_{hx})
	\end{alignat*}
	Since $$(f(u_h), q_h) = (\varPi f(u_h), q_h)$$ and $$(\varPi f(u_h), u_{hx})=(f(u_h), u_{hx})= \langle V(u_h), n\rangle,$$ we have that
	\begin{alignat*}{1}
	0 = & (u_{ht}, u_h) - \langle p_h, u_h n\rangle + \langle \widehat{p_h}, u_h n\rangle + \langle \widehat{u}_h, p_h n\rangle
	+ \frac{\varepsilon}{2}\langle q_h^2,n \rangle -\varepsilon \langle \widehat{q}_h q_h,n\rangle\\
	& + \langle u_h - \widehat{u_h}, \varPi f(u_h) n\rangle -  \langle V(u_h), n\rangle\\
	= & (u_{ht}, u_h) - \langle \widehat{p_h} - p_h + \varPi f(u_h), (\widehat{u}_h - u_h)n\rangle
	+ \frac{\varepsilon }{2}\langle (q_h - \widehat{q_h})^2,n \rangle -  \langle V(u_h), n\rangle,
	\end{alignat*}
	where we have used the single-valuedness of numerical traces. This means that
	\begin{alignat*}{1}
	\frac{1}{2} \frac{d}{dt} (u_{h}, u_h) = &\langle V(u_h), n\rangle  + \langle \widehat{p_h} - p_h + \varPi f(u_h), (\widehat{u}_h - u_h)n\rangle
	-  \frac{\varepsilon}{2}\langle (q_h - \widehat{q_h})^2,n \rangle\\
	= &\sum_{i=1}^N  \left(\jmp{V(u_h)} - \{\varPi f(u_h)\} \jmp{u_h} +(\jmp{\varPi f(u_h)} - \jmp{p_h}) (\widehat{u_h} - \{u_h\}) \right.\\
	& \left.-\jmp{u_h}(\widehat{p_h} - \{p_h\}) + \varepsilon\jmp{q_h} (\widehat{q_h} - \{q_h\})\right)(x_i).
	\end{alignat*}
	Here, we used the equality $\langle \rho, v n \rangle = \sum_{i=1}^N (\jmp{\rho} \{ v\} + \jmp{v} \{ \rho\})(x_i)$ for any $\rho, v\in W_h^k$. 
	When the condition \eqref{eq:conserv_cond1} is satisfied, we immediately get the energy-conservation, \eqref{eq:L2_conserv}.

	(iii) To prove the Hamiltonian conservation properties in \eqref{eq:Hamiltonian_conserv}, we first differentiate the equation \eqref{eq:scheme_time1} with respect to $t$ to obtain
	\[
	({q}_{ht},{v})  + (u_{ht},v_x)  - \langle \widehat{u}_{ht},{v}n \rangle  = 0.
	\]
	Then, we take $v := \varepsilon q_{h}$ in the equation above, $z := u_{ht}$ in \eqref{eq:scheme_time2} and $w := -p_h$ in \eqref{eq:scheme_time3} and add the three equations together to get
	\begin{alignat*}{1}
	(f(u_h), u_{ht}) = \;& \varepsilon(q_{ht}, q_h) + (p_h, p_{hx}) - \langle \widehat{p_h}, p_h n \rangle \\
	&+ \varepsilon (u_{ht}, q_{hx}) + \varepsilon(q_h, u_{htx}) - \varepsilon\langle \widehat{u}_{ht}, q_hn \rangle
	-  \varepsilon \langle \widehat{q}_h, u_{ht} n\rangle.
		\end{alignat*}
	Since $\widehat{u_h}, \widehat{q_h}$, and $\widehat{p_h}$ are single-valued, we have	
	\begin{alignat*}{1}
		(f(u_h), u_{ht}) =\; & \varepsilon (q_{ht}, q_h) + \langle \frac{1}{2}  p_h^2,n\rangle - \langle \widehat{p_h} p_h, n \rangle + \varepsilon\langle u_{ht} - \widehat{u}_{ht}, (q_h - \widehat{q}_h)n\rangle\\
	= \;& \varepsilon(q_{ht}, q_h) + \frac{1}{2}\langle (p_h - \widehat{p_h})^2,n\rangle + \varepsilon\langle u_{ht} - \widehat{u}_{ht}, (q_h - \widehat{q}_h)n\rangle.
	\end{alignat*}
	This implies that
	\begin{alignat*}{1}
	&\frac{d}{dt} \left(\frac{\varepsilon}{2} (q_h, q_h) - (V(u_h),1) \right) \\
	= & \sum_{i=1}^N  \left( \jmp{p_h} (\widehat{p_h} - \{p_h\}) + \varepsilon\jmp{q_h} (\widehat{u_h} - \{u_h\})_t + \varepsilon\jmp{u_h}_t (\widehat{q_h} - \{q_h\})\right)(x_i).
	\end{alignat*}
 If the numerical traces satisfy \eqref{eq:conserv_cond2}, we get the conservation of the Hamiltonian \eqref{eq:Hamiltonian_conserv}. 	
	This concludes the proof of Lemma \ref{lemma:conserv_general}.
\end{proof}

Next we use Lemma \ref{lemma:conserv_general} to show that our scheme defined by \eqref{eq:scheme_time} - \eqref{eq:trace1} conserves the  mass, the $L^2$-energy, and the Hamiltonian of the numerical solutions.

\begin{theorem}
\label{thm:conserv}
For $(u_h, q_h, p_h)$ satisfying \eqref{eq:scheme_time} with $g=0$ and numerical traces defined by \eqref{eq:scheme_hats} - \eqref{eq:trace1},
 the mass, $L^2$-energy and Hamiltonian conservation properties in Lemma \ref{lemma:conserv_general} hold. 

\end{theorem}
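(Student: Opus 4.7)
The plan is to invoke Lemma \ref{lemma:conserv_general} and verify its three hypotheses one by one for our specific choice of numerical traces in \eqref{eq:scheme_hats} and constraints in \eqref{eq:trace1}. First I would observe that all three numerical traces are single-valued: by construction, $\{u_h\}(x_i)$, $\{q_h\}(x_i)$, $\{p_h\}(x_i)$, and $\jmp{u_h}(x_i)$ are each uniquely defined at every node $x_i\in\Eh$, and $\tau_{qu}$, $\tau_{pu}$ are global constants, so $\widehat{u_h}$, $\widehat{q_h}$, $\widehat{p_h}$ all belong to $L^2(\Eh)$. In particular, $\widehat{p_h}$ is single-valued, so part (i) of the lemma applies immediately and yields the mass conservation.

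Next I would turn to energy conservation by checking condition \eqref{eq:conserv_cond1}. The key simplification is that \eqref{eq:uhat} gives $\widehat{u_h}-\{u_h\}=0$, while \eqref{eq:qhat}--\eqref{eq:phat} give $\widehat{q_h}-\{q_h\}=\tau_{qu}\jmp{u_h}$ and $\widehat{p_h}-\{p_h\}=\tau_{pu}\jmp{u_h}$. Substituting into \eqref{eq:conserv_cond1}, the term involving $\jmp{\varPi f(u_h)}-\jmp{p_h}$ drops out because it multiplies $\widehat{u_h}-\{u_h\}=0$, and the remaining terms collapse to
\[
\sum_{i=1}^N \Big(\jmp{V(u_h)} - \{\varPi f(u_h)\}\jmp{u_h}\Big)(x_i) - \tau_{pu}\sum_{i=1}^N \jmp{u_h}^2(x_i) + \varepsilon\tau_{qu}\sum_{i=1}^N \jmp{q_h}\jmp{u_h}(x_i),
\]
which vanishes precisely by \eqref{eq:tau_pu1}. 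Thus part (ii) of Lemma \ref{lemma:conserv_general} applies, giving the $L^2$-energy conservation.

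For the Hamiltonian, I would plug the same expressions into \eqref{eq:conserv_cond2}. Since $\widehat{u_h}-\{u_h\}=0$ is constant in $t$, its time derivative also vanishes, eliminating the middle term. What remains is
\[
\sum_{i=1}^N \Big(\tau_{pu}\jmp{p_h}\jmp{u_h} + \varepsilon\tau_{qu}\jmp{u_h}_t\jmp{u_h}\Big)(x_i),
\]
which is exactly the left-hand side of \eqref{eq:tau_qu1} and therefore equals zero. Part (iii) of the lemma then delivers the Hamiltonian conservation.

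There is essentially no hard step: the constraints \eqref{eq:tau_pu1}--\eqref{eq:tau_qu1} were engineered so that substituting the traces \eqref{eq:scheme_hats} into the abstract conditions \eqref{eq:conserv_cond1}--\eqref{eq:conserv_cond2} reproduces them line for line. The only minor care required is in tracking signs, dropping the terms that are killed by $\widehat{u_h}-\{u_h\}=0$, and noting the single-valuedness needed to invoke the lemma. So the proof will be short and mostly bookkeeping, with no analytic obstacle to overcome.
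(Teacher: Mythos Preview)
Your proposal is correct and follows essentially the same approach as the paper's own proof: both verify the three hypotheses of Lemma~\ref{lemma:conserv_general} by substituting the traces \eqref{eq:scheme_hats} into the abstract conditions \eqref{eq:conserv_cond1}--\eqref{eq:conserv_cond2} and observing that the resulting expressions are precisely \eqref{eq:tau_pu1} and \eqref{eq:tau_qu1}. Your bookkeeping (in particular, retaining the factor of $\varepsilon$ in the Hamiltonian computation) is accurate.
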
 

\begin{proof}
(i) The numerical traces in \eqref{eq:scheme_hats} are single-valued, so the DG scheme conserves the mass of the approximate solutions.

(ii) Using  \eqref{eq:scheme_hats},  we see that
\begin{alignat*}{1}
&\sum_{i=1}^N  \Big(\jmp{V(u_h)} - \{\varPi f(u_h)\} \jmp{u_h} +(\jmp{\varPi f(u_h)} - \jmp{p_h}) (\widehat{u_h} - \{u_h\})\\
& \quad\quad-\jmp{u_h}(\widehat{p_h} - \{p_h\}) + \varepsilon\jmp{q_h} (\widehat{q_h} - \{q_h\}) \Big)(x_i)\\
 = &\sum_{i=1}^N  \left(\jmp{V(u_h)} - \{\varPi f(u_h)\} \jmp{u_h}  -  \tau_{pu} \jmp{u_h}^2 + \varepsilon\tau_{qu}\jmp{u_h} \jmp{q_h} \right),
\end{alignat*}
which is equal to $0$ when the condition \eqref{eq:tau_pu1} 
holds. Then we get the $L^2$ conservation by Lemma \ref{lemma:conserv_general}.

(iii) Using the definition of the numerical traces \eqref{eq:scheme_hats}, we get
\begin{alignat*}{1}
& \sum_{i=1}^N  \left( \jmp{p_h} (\widehat{p_h} - \{p_h\}) +\varepsilon \jmp{q_h} (\widehat{u_h} - \{u_h\})_t + \varepsilon \jmp{u_h}_t (\widehat{q_h} - \{q_h\})\right)(x_i)\\
= & \sum_{i=1}^N  \left( \tau_{pu} \jmp{p_h} \jmp{u_h} + \tau_{qu} \jmp{u_h}_t \jmp{u_h}\right)=0
\end{alignat*}
by \eqref{eq:tau_qu1}. So we immediately get the conservation of the Hamiltonian \eqref{eq:Hamiltonian_conserv} using Lemma \ref{lemma:conserv_general}. 

This concludes the proof of Theorem \ref{thm:conserv}.
\end{proof}

\begin{remark}\label{remark:choice_hats}
	We would like to point out that Lemma \ref{lemma:conserv_general} provides a framework for achieving full conservation of mass, energy and Hamiltonian. Specifically, any choices of $\widehat{u_h}, \widehat{q_h}, \widehat{p_h}$ that satisfy the conditions  \eqref{eq:conserv_cond1} and \eqref{eq:conserv_cond2} will do. The numerical traces we have in \eqref{eq:scheme_hats} are just one of them. There are many other choices. For example, one can choose $\widehat{q_h}=\{q_h\}$ and determine $\widehat{u_h}$ and $\widehat{p_h}$ from equations \eqref{eq:conserv_cond1} and \eqref{eq:conserv_cond2}. 
	The scope of this paper is to discover a novel  paradigm for designing new conservative DG methods by letting the stabilization parameters be new unknowns so that conservation properties can be  explicitly embedded into the scheme and therefore their achievement guaranteed. 

\end{remark}

\section{Implementation}
\label{sec:implementation}

In this section, we provide a high-level summary of the implementation of our method. Further details are deferred to Appendix \ref{appendix:Implementation}.

\subsection{Time-stepping scheme}
Since KdV equations have the third-order spatial derivative term, we choose implicit time-marching schemes to avoid using extremely small time steps. Moreover, we need the time stepping method to be conservative so that the fully discrete scheme is conservative. 
Here,
we use the following implicit second-order Midpoint method, which preserves the conservation laws up to round-off error. \dong{This is proven in  \cite{DekkerVerwer1984} and adopted in \cite{bona2013conservative,KarakashianXing16} for the development of energy-conserving DG methods and \cite{liu2016hamiltonian} for a Hamiltonian-preserving DG scheme. Numerical results therein and of our paper demonstrate numerically that the Midpoint method does indeed conserve conservation laws including Hamiltonian.} Let $0=t_0< t_1<\cdots<t_M=T$ be a uniform partition of the interval $[0, T]$ and $\Delta t= t_{n+1}-t_n$ be the step size. For $n=0, \dong{\ldots}, M-1$, let $u_h^{n+1}\in W^k_h$ be defined as:
\[u_h^{n+1} = 2u_h^{n+\frac{1}{2}}-u_h^{n},\]
where $u_h^{n+\frac{1}{2}}\in W^k_h$ is the DG solution to the equation
\[\frac{u-u_h^n}{\frac{1}{2}\Delta t}+\varepsilon u_{xxx}+f(u)_x=g(x,t_{n+\frac{1}{2}}).
\]

 At every time step $t_{n+\frac{1}{2}}, n=0, \dong{\ldots}, M-1$, we need to solve equations \eqref{eq:scheme_time}, \eqref{eq:tau_pu1}, and \eqref{eq:tau_qu1} for $u_h$, $q_h$, $p_h$, $\tau_{qu}$, and $\tau_{pu}$. We can rewrite the nonlinear system into the following matrix-vector form and use MATLAB's built-in function ``Fsolve" to solve it.
\begin{subequations}\label{MatrixEqsF}
	\begin{align}
		&\textbf{M}[q]+(\textbf{D}+\textbf{A})[u]  = 0 \label{eq:F1}\\
		&\textbf{M}[p] + \varepsilon (\textbf{D}+ \textbf{A})[q] + \varepsilon \tau_{qu}\textbf{J}[u]  - \textbf{M}[f(u_h))] = 0\label{eq:F2}\\
		&\textbf{M}[u] - \frac{1}{2}\Delta t(\textbf{D}+  \textbf{A})[p]  -\frac{1}{2}\Delta t\tau_{pu}\textbf{J}[u] -\textbf{M}[\bar{u}]-\frac{1}{2}\Delta t \textbf{M}[g]  = 0\label{eq:F3}\\
		& V_f - \tau_{pu} \eta(u_h,u_h) + \varepsilon \tau_{qu} \eta(q_h, u_h) = 0\label{eq:F4}\\
		&\tau_{pu}\eta(p_h, u_h) + \tau_{qu}\sum_{i=1}^N \varepsilon \jmp{u_h}\jmp{u_h}_{t}(x_i)   = 0\label{eq:F5} 
	\end{align}
\end{subequations}
where $[u], [q], [p]$ are vectors consisting of degrees of freedom of $u_h^{n+ \frac{1}{2}}, q_h^{n+ \frac{1}{2}}, p_h^{n+ \frac{1}{2}}$, respectively, $[\bar{u}]$ is the  {\em known} vector for the degrees of freedom of $u_h^n$,  $\textbf{M}$ is the mass matrix, $\textbf{D}$ is the derivative matrix,  $\textbf{A}$ is \dong{the} matrix associated to the average flux, and $\textbf{J}$ is the matrix associated to the jump; see Appendix \ref{appendix:Implementation} for details on these matrices. 
 In \eqref{eq:F4} and \eqref{eq:F5}, we have adopted the notation defined in Section \ref{sec:dgm} 
\[
\eta(w,v)=\sum_{i=1}^N\jmp{w} \jmp{v}(x_i) \quad \textrm{ for any } w, v \in \big \{u_h,q_h,p_h\big\},
\]
and a new quantity 
$V_f := {\displaystyle \sum_{i=1}^N (\jmp{V(u_h)} - \{\varPi f(u_h)\} \jmp{u_h})(x_i)}.$ 

 The solution of this system, $([u], [q], [p], \tau_{qu}, \tau_{pu})$, can be considered as a column vector of size $[3(N-1)(k+1) + 2]$. So by introducing two more unknowns ($\tau_{qu}, \tau_{pu}$) and enforcing the two equations for conservation of energy and Hamiltonian, we only increase the size of the system by 2.

\subsection{Three-point difference formulas for $\jmp{u_h}_t$}
The last equation of the system, \eqref{eq:F5}, contains the non-traditional term $\jmp{u_h}_t$. We approximate it by the following three-point difference formula on uniform stencil to maintain the second-order accuracy in time

\[
\jmp{u_h}_t^{n+ \frac{1}{2}}=\frac{1}{\Delta t}\big(\jmp{u_h}^{n- \frac{1}{2}}-4\jmp{u_h}^n+3\jmp{u_h}^{n+ \frac{1}{2}}\big)+\mathcal{O}(\Delta t ^2).
\]
When $n=0$, we approximate $\jmp{u_h}_t^{\frac{1}{2}}$ by a three-point difference formula on a non-uniform stencil using 
$\jmp{u_h}$ at $t=0, (\frac{\Delta t}{2})^2,$ and $\frac{\Delta t}{2}$,
where $u_h^0$ is obtained by the $L^2$-projection of $u_0$ and $u_h^{(\frac{\Delta t}{2})^2}$ is computed using the backward Euler method. The nonuniform three-point difference formula for $\jmp{u_h}_t^{\frac{1}{2}}$ is as follows: 
\[
\jmp{u_h}_t^{\frac{1}{2}}=c_1 \jmp{u_h}^0 +c_2 \jmp{u_h}^{(\frac{\Delta t}{2})^2} +c_3\jmp{u_h}^{\frac{\Delta t}{2}}+\mathcal{O}(\Delta t^2)
\]
where 
\[
c_1=\frac{1-\frac{\Delta t}{2}}{\big(\frac{\Delta t}{2}\big)^2},\quad c_2=-\frac{1}{\big(\frac{\Delta t}{2}\big)^2 \big(1-\frac{\Delta t}{2}\big)},\quad c_3=\frac{2-\frac{\Delta t}{2}}{\big(\frac{\Delta t}{2}\big)\big(1-\frac{\Delta t}{2}\big)}.
\]

\subsection{The flowchart of the whole algorithm}
After solving for $u_h^{n+\frac{1}{2}}$ from the system \eqref{MatrixEqsF} with the $\jmp{u_h}_t$ term approximated by the three-point difference formulas above, we compute $u_h^{n+1}$ through the midpoint method. Then we solve for $q_h^{n+1}$ from the linear equation \eqref{eq:F1} using $u_h^{n+1}$. In order to obtain $p_h^{n+1}$, $\tau_{qu}^{n+1}$ and $\tau_{pu}^{n+1}$, we solve a smaller nonlinear system consisting of equations \eqref{eq:F2}, \eqref{eq:F4} and \eqref{eq:F5} using $u_h^{n+1}$ and $q_h^{n+1}$. To summarize, we use the following flowchart to describe the whole algorithm.\\

\begin{tikzpicture}[font=\small,thick]
	
	\node[draw,
	rounded rectangle,
	minimum width=2cm,
	minimum height=1cm] (block1) {\bf START};
	
	\node[draw,
	align=center,
	trapezium, 
	trapezium left angle = 55,
	trapezium right angle = 125,
	trapezium stretches,
	right=of block1,
	minimum width=1cm,
	minimum height=1cm
	] (block2) { Compute $u_h^0$ from $u_0$  \\ and set $n=0$. };
	
	\node[draw,
	align=center,
	right=of block2,
	minimum width=1.0cm,
	minimum height=1cm
	] (block3) { Use backward Euler method \\to  evaluate $u_h$ at $t=\big(\frac{\Delta t}{2}\big)^2$ };
	
	\node[draw,
	below =of block3,
	align=center,
	minimum width=2.5cm,
	minimum height=1.5cm
	] (block4) { Solve system \eqref{MatrixEqsF} for \\$u_h^{n+\frac{1}{2}}$ and use the Midpoint \\ method to get  $u_h^{n+1}$};
	
	\node[draw,
	left=of block4,
	align=center,
	minimum width=3.5cm,
	minimum height=1.5cm
	] (block5) {Solve \eqref{eq:F1} for $q_{h}^{n+1}$. \\ Solve \eqref{eq:F2}, \eqref{eq:F4} and \eqref{eq:F5} for\\ $\big(p_{h}^{n+1}, \tau_{qu}^{n+1}, \tau_{pu}^{n+1}\big)$.\\ 
	Let $n \leftarrow n+1$};
	
	\node[draw,
	diamond,
	below=of block5,
	minimum width=2.5cm,
	inner sep=0] (block6) { $t_{n} \ge T$ ?};
	
	\node[draw,
	rounded rectangle,
	left =of block6,
	minimum width=2cm,
	minimum height=1cm
	] (block7) {\bf End};
	
	\draw[-latex] (block1) edge (block2)
	(block2) edge (block3)
	(block3) edge (block4)
	(block4) edge (block5)
	(block5) edge (block6);
   \draw[-latex] (block6)--node[anchor=south,pos=0.5,fill=white,inner sep=3]{Yes} (block7);  
	\draw[-latex] (block6) -| (block4)
	node[anchor=south,pos=0.25,fill=white,inner sep=3]{No};
\end{tikzpicture}


\bigskip
\section{Numerical Results}
\label{sec:numericalresults}
In this section, we carry out numerical experiments to test the convergence and conservation properties of our DG method. \bo{In the first test problem, we consider a third-order linear equation with $f(u) = u$.}
In the second test problem, we use our DG method to solve a third-order nonlinear equation with $\varepsilon=1$, $0.1$, and $0.01$ and the solutions are sine waves that are periodic on the domain. In the \bo{last} test problem, we solve the classical KdV equation with a cnoidal wave solution and  compare the approximate solution with the exact one. 
For \dong{all the} test problems, we compute the $L^2$-errors and convergence orders and check the conservation of the energy and Hamiltonian of the DG solutions.

\subsection{Numerical Experiment 1}
In this test, we solve the following third-order linear equation  \bo{in \cite{zhang2019conservative}} 
\begin{equation*}
	u_{t} + \varepsilon u_{xxx} + (f(u))_{x} = 0,
\end{equation*}
where $\varepsilon = 1$ and $f(u) = u$,  with periodic boundary conditions on the domain $\Omega = [0,4\pi]$ and the initial condition $u_{0} = \sin(\frac{1}{2}x)$.  The exact solution to this problem is
\[u(x,t) = \sin\bigg(\frac{1}{2}x - \frac{3}{8}t\bigg).\] 
First, we test the convergence of the DG method for this linear problem. We use polynomials of degree $k=0, 1, 2$ for approximate solutions\dong{, the mesh size $h=\frac{4\pi}{N}$ for $N=2^l, l=3,\ldots, 7$, 
and  $\Delta t = \dong{0.2}(\frac{h}{4\pi})^{\min\{k,1\}}$ for time discretization.} The  $L^2$-errors and orders of convergence of the approximate solutions are displayed in Table \ref{fig:TableLinear} for  the final time $T = 0.1$. 
 \dong{We see that the approximate solutions for the variable $u$ converge with an optimal order for all polynomial degrees $k$, those for the auxiliary variable $q$ have an optimal convergence order for even $k$ and a sub-optimal order for odd $k$, and those for $p$ have sub-optimal orders for $k=1, 2$. }

\begin{table}
	\dong{
	\begin{tabular}{|l|c|c|c|c|c|c|c|c|}
		\hline
		\multirow{2}{*}{\textbf{k}}& \multirow{2}{*}{\textbf{N}}& \multicolumn{2}{c|}{$\mathbf{u_h}$} &\multicolumn{2}{c|}{$\mathbf{q_h}$} & \multicolumn{2}{c|}{$\mathbf{p_h}$} \\ 
		\cline{3-8}
		&  &\textbf{$L_{2}$ Error}&\textbf{Order }&\textbf{$L_{2}$ Error}&\textbf{Order }&\textbf{$L_{2}$ Error}&\textbf{Order }\\\hline
		\multirow{4}{*}{0}&\textbf{8}& 5.70e-1&-& 3.10e-1&-& 1.81e-0&-\\
		&\textbf{16}& 2.98e-1& 0.93& 1.52e-1& 1.02& 1.89e-0& -0.06\\
		&\textbf{32}& 1.42e-1& 1.07& 7.14e-2& 1.09& 1.07e-1& 4.15\\
		&\textbf{64}& 7.10e-2& 1.00& 3.56e-2& 1.01& 5.33e-2& 1.00\\
		&\textbf{128}& 3.55e-2& 1.00& 1.78e-2& 1.00& 2.66e-2&1.00\\\hline
		\multirow{4}{*}{1}&\textbf{8}& 5.80e-2&-& 2.55e-1&-& 2.34e-1&-\\
		&\textbf{16}& 1.44e-2& 2.01& 1.38e-1& 0.88& 1.35e-1& 0.79\\
		&\textbf{32}& 3.60e-3& 2.00& 7.06e-2&0.97 & 7.02e-2& 0.95\\
		&\textbf{64}& 9.00e-4& 2.00& 3.55e-2& 0.99& 3.62e-2& 0.95\\
		&\textbf{128}& 2.25e-4& 2.00& 1.78e-2& 1.00& 1.13e-2& 1.68\\\hline
		\multirow{4}{*}{2}&\textbf{8}& 3.93e-3&-& 7.92e-3&-& 4.07e-2&-\\
		&\textbf{16}& 4.84e-4& 3.02& 9.76e-4&3.02 &9.45e-3 & 2.11\\
		&\textbf{32}& 6.00e-5& 3.01& 1.23e-4&2.99 & 2.31e-3& 2.03\\
		&\textbf{64}& 7.47e-6& 3.01& 1.52e-5& 3.02& 5.78e-4& 2.00\\
		\hline
	\end{tabular}
	\caption{Numerical Experiment 1 (third-order linear equation): Error and convergence order of $u_h$, $q_h$, and $p_h$}
	\label{fig:TableLinear}
}
\end{table}

Next, we test the conservation of the energy and Hamiltonian of the approximate solution using  polynomials of degree $k=2$ on 32 intervals for the final time $T = 50$. In Figure \ref{fig:linear}, we see that the Hamiltonian and energy of the approximate solution remain the same over the whole time period. \dong{The errors of the Hamiltonian and energy are very small, as shown on the second row of Figure \ref{fig:linear}.}

\begin{figure}[!h]
	\begin{center}
		\includegraphics[scale=0.25]{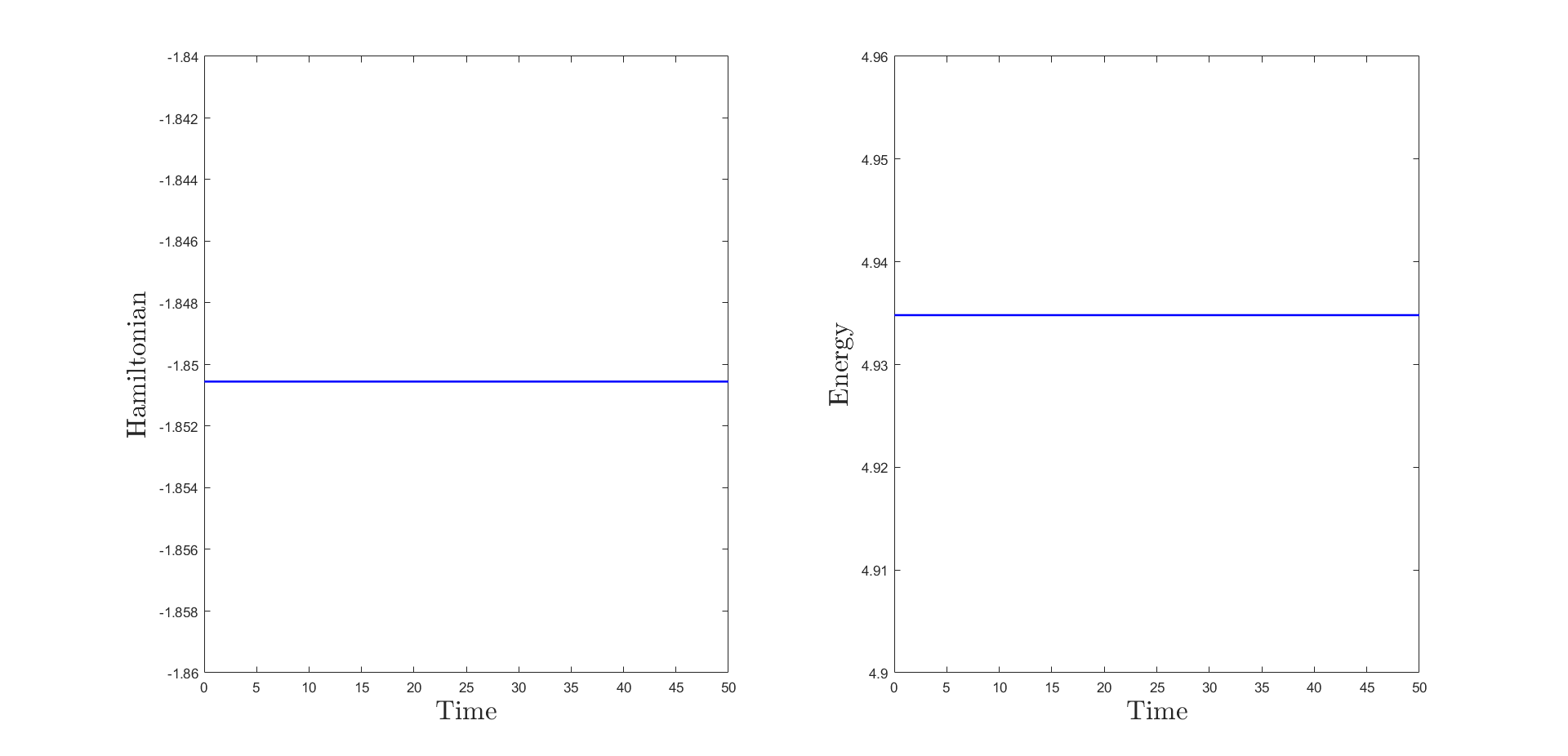}
		\includegraphics[scale=0.25]{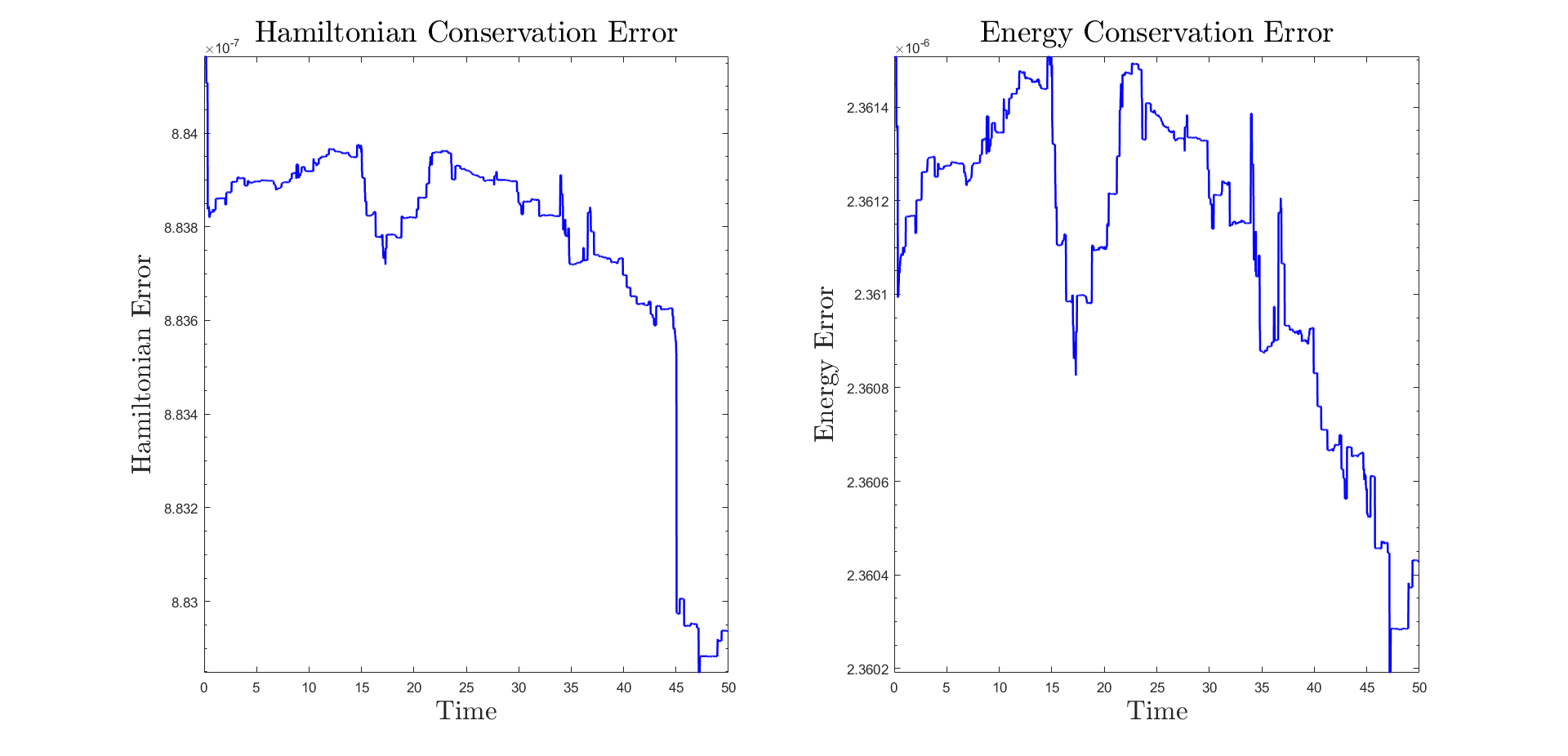}
	\end{center}
	\caption{Numerical Experiment 1 (third-order linear equation): Hamiltonian (Left) and energy (Right) conservation. \dong{Shown on the bottom are the corresponding errors.}}
	\label{fig:linear}
\end{figure}

\subsection{Numerical Experiment 2}
In the second test, we consider the following third-order nonlinear equation 
\begin{equation*}
    u_{t} + \varepsilon u_{xxx} + (f(u))_{x} = g
\end{equation*}
with periodic boundary conditions on $\Omega=[0, 1]$ and the initial condition $u_0=\sin{(2\pi x)}$, where $f(u) = \frac{u^2}{2}$ and $g$ is the function which gives the solution 
\[u(x,t) = \sin(2\pi x + t).\]

For this problem, we first test the convergence orders of our DG method for $\varepsilon=1$, $0.1$ and $0.01$ when using polynomials of degree $k=0, 1, 2$. We use \dong{$h=1/N$, where  $N=2^l, l=3, \ldots, 7$, and $\Delta t=0.2\, h^{\min\{k,1\}}$ for time discretization}, and the final time is $T=0.1$. The $L^2$-errors and orders of convergence   for $\varepsilon=1$, 0.1, 0.01  are displayed in Table \ref{fig:Table1}, Table \ref{fig:Table2}, and Table \ref{fig:Table3}, respectively. 
\dong{Note that for existing energy-conserving DG methods in  \cite{bona2013conservative,KarakashianXing16,chen2016new}, it is typical that approximate solutions to $u$ have optimal convergence orders when $k$ is even and sub-optimal orders when $k$ is odd. Here, we see that our method has comparable convergence rates.}

\begin{table}
	\dong{
	\begin{tabular}{|l|c|c|c|c|c|c|c|c|}
		\hline
		\multirow{2}{*}{\textbf{k}}& \multirow{2}{*}{\textbf{N}}& \multicolumn{2}{c|}{\bo{$\mathbf{u_h}$}} &\multicolumn{2}{c|}{\bo{$\mathbf{q_h}$}} & \multicolumn{2}{c|}{\bo{$\mathbf{p_h}$}} \\ 
		\cline{3-8}
		&  &\textbf{$L_{2}$ Error}&\textbf{Order }&\textbf{$L_{2}$ Error}&\textbf{Order }&\textbf{$L_{2}$ Error}&\textbf{Order }\\\hline
		\multirow{4}{*}{0}&\textbf{8}& 3.81e-1&-&1.96e-0 &-& 1.05e+1&-\\
		&\textbf{16}&8.00e-2 & 2.25& 5.14e-1 & 1.93 & 3.41e-0& 1.61\\
		&\textbf{32}&4.83e-2 & 0.73& 2.89e-1 &0.83 & 1.73e-0& 0.97\\
		&\textbf{64}&2.03e-2 & 1.25& 1.27e-1&1.18 & 7.99e-1& 1.12\\
		&\textbf{128}&1.01e-2 &1.01 & 6.33e-2&1.01 &3.97e-1 &1.01\\\hline
		\multirow{4}{*}{1}&\textbf{8}& 5.29e-2&-& 9.51e-1&-& 1.05e+1&-\\
		&\textbf{16}& 7.19e-2& -0.44& 8.16e-1& 0.22& 2.00e-0& 2.40\\
		&\textbf{32}& 1.67e-2& 2.10& 1.92e-1& 2.09& 3.78e-0& -0.92\\
		&\textbf{64}& 1.09e-3& 3.93& 1.26e-1& 0.61& 3.46e-2& 6.77\\
	    &\textbf{128}& 1.49e-4& 2.88& 6.29e-2& 1.00& 1.08e-1& -1.64\\\hline
		\multirow{4}{*}{2}&\textbf{8}& 2.08e-3&-& 1.23e-1&-& 7.89e-0&-\\
		&\textbf{16}& 1.35e-4& 3.94& 3.48e-3& 5.14 &4.27e-1 & 4.21\\
		&\textbf{32}& 1.69e-5& 3.00& 4.31e-4& 3.01 &1.04e-1 & 2.04\\
		&\textbf{64}& 2.11e-6& 3.00& 5.38e-5& 3.00 &2.59e-2 & 2.01\\
		\hline
	\end{tabular}
	\caption{Numerical Experiment \bo{2 (third-order nonlinear equation)}: Errors and convergence orders of $u_h$, $q_h$, and $p_h$ for $\varepsilon = 1$}
	\label{fig:Table1}
}
\end{table}

\begin{table}
	\dong{
	\begin{tabular}{|l|c|c|c|c|c|c|c|c|}
		\hline
		\multirow{2}{*}{\textbf{k}}& \multirow{2}{*}{\textbf{N}}& \multicolumn{2}{c|}{\bo{$\mathbf{u_h}$}} &\multicolumn{2}{c|}{\bo{$\mathbf{q_h}$}} & \multicolumn{2}{c|}{\bo{$\mathbf{p_h}$}} \\ 
		\cline{3-8}
		&  &\textbf{$L_{2}$ Error}&\textbf{Order }&\textbf{$L_{2}$ Error}&\textbf{Order }&\textbf{$L_{2}$ Error}&\textbf{Order }\\\hline
		\multirow{4}{*}{0}&\textbf{8}& 3.51e-1&-& 1.87e-0&-& 1.06e-0&-\\
		&\textbf{16}& 1.17e-1 & 1.58 & 6.75e-1 & 1.47 & 4.00e-1 & 1.40\\
		&\textbf{32}& 4.63e-2 & 1.34 & 2.80e-1 & 1.27 & 1.73e-1 & 1.21\\
		&\textbf{64}& 2.11e-2 & 1.14 & 1.31e-1 & 1.10 & 8.18e-2 & 1.08\\
		&\textbf{128}& 1.02e-2 & 1.05 & 6.36e-2 & 1.04 & 4.01e-2 &1.03\\\hline
		\multirow{4}{*}{1}&\textbf{8}& 6.62e-2&-& 8.79e-1&-& 1.17e-0&-\\
		&\textbf{16}& 4.08e-2& 0.70& 3.11e-1& 1.50& 7.86e-1& 0.58\\
		&\textbf{32}& 2.05e-2& 0.99& 1.48e-1& 1.07& 4.11e-1& 0.94\\
		&\textbf{64}& 1.48e-3& 3.80& 1.26e-1& 0.24& 5.84e-2& 2.82\\
	    &\textbf{128}& 3.71e-4& 2.00& 6.29e-2& 1.00& 5.14e-2& 0.18\\\hline
		\multirow{4}{*}{2}&\textbf{8}& 1.30e-3&-& 3.00e-2&-& 1.86e-1&-\\
		&\textbf{16}& 1.44e-4& 3.17& 3.52e-3& 3.09& 4.06e-2& 2.19\\
		&\textbf{32}& 1.69e-5& 3.09& 4.29e-4& 3.04& 1.04e-2 & 1.96\\
		&\textbf{64}& 2.11e-6& 3.00& 5.42e-5& 2.98& 2.63e-3& 1.99\\
\hline
	\end{tabular}
	\caption{Numerical Experiment \bo{2 (third-order nonlinear equation)}: Errors and convergence orders of $u_h$, $q_h$, and $p_h$ for $\varepsilon = 0.1$}
	\label{fig:Table2}
}
\end{table}

\begin{table}
	\dong{
		\begin{tabular}{|l|c|c|c|c|c|c|c|}
			\hline
			\multirow{2}{*}{\textbf{k}}& \multirow{2}{*}{\textbf{N}}& \multicolumn{2}{c|}{\bo{$\mathbf{u_h}$}} &\multicolumn{2}{c|}{\bo{$\mathbf{q_h}$}} & \multicolumn{2}{c|}{\bo{$\mathbf{p_h}$}} \\ 
			\cline{3-8}
			&  &\textbf{$L_{2}$ Error}&\textbf{Order }&\textbf{$L_{2}$ Error}&\textbf{Order }&\textbf{$L_{2}$ Error}&\textbf{Order }\\\hline
			\multirow{4}{*}{0}&\textbf{8}& 1.75e-1&-& 1.20e-0&-& 1.43e-1&-\\
			&\textbf{16}& 8.33e-2& 1.07& 5.62e-1& 1.10& 6.19e-2& 1.20\\
			&\textbf{32}& 4.07e-2& 1.03& 2.62e-1& 1.10& 2.73e-2& 1.18\\
			&\textbf{64}& 2.01e-2& 1.02& 1.27e-1& 1.05& 1.30e-2& 1.08\\
			&\textbf{128}& 1.00e-2& 1.00& 6.31e-2& 1.01& 6.40e-3&1.02\\\hline
					\multirow{4}{*}{1}&\textbf{8}& 2.30e-2&-& 8.93e-1&-& 9.15e-2&-\\
			&\textbf{16}& 4.08e-2& -0.83& 3.14e-1& 1.51& 7.09e-2&0.37 \\
			&\textbf{32}& 2.05e-3& 4.31& 2.53e-1& 0.31& 3.20e-2& 1.15\\
			&\textbf{64}& 1.04e-3& 0.99& 1.26e-1& 1.01& 1.59e-2& 1.01\\
			&\textbf{128}&5.89e-4 &0.81 & 6.29e-2 & 1.00 & 8.34e-3 &0.93 \\\hline
			\multirow{4}{*}{2}&\textbf{8}& 1.24e-3&-& 3.23e-2&-& 1.63e-2&-\\
			&\textbf{16}& 1.41e-4& 3.13& 3.39e-3& 3.25& 4.12e-3& 1.99\\
			&\textbf{32}& 1.70e-5& 3.06& 7.20e-4& 2.23& 1.75e-3& 1.24\\
			&\textbf{64}& 2.37e-6& 2.84& 1.08e-4& 2.74& 5.30e-4& 1.72\\
			\hline
		\end{tabular}
		\caption{Numerical Experiment \bo{2 (third-order nonlinear equation)}: Errors and convergence orders of $u_h$, $q_h$, and $p_h$ for $\varepsilon = 0.01$}
		\label{fig:Table3}
	}
\end{table}

Next, we plot the exact solutions and the numerical solutions with quadratic polynomials on 32 elements for different $\varepsilon$. Note that $u$ and $q$  are not changing with respect to $\varepsilon$ in this test problem, but $p$ depends on $\varepsilon$. So we plot $u, q$, $u_h$ and $q_h$ over the time period [0, 5] in Figure \ref{fig:graph5a} and the snapshot of them at the time $T=5$ in Figure \ref{fig:graph5a_finaltime}. The graphs of $p$ and $p_h$ for different $\varepsilon$ over the time period [0, 5] are plotted in Figure \ref{fig:graph5b} and the snapshots of them at the time $T=5$ are in Figure \ref{fig:graph5b_finaltime}. We see that in all the figures the graphs of numerical solutions match well with those of exact solutions.

\begin{figure}[!h]
\begin{center}
\includegraphics[scale=0.25]{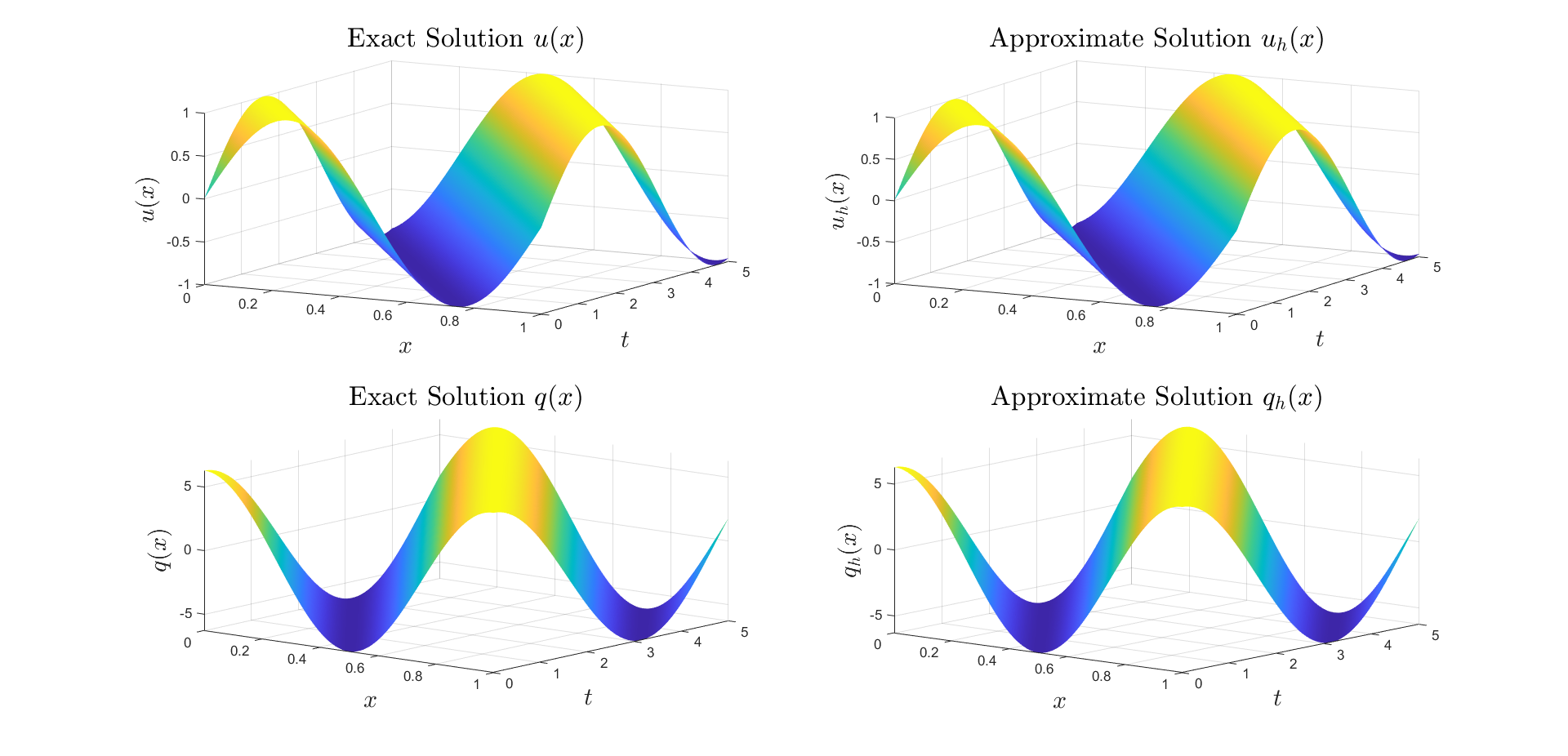}
\end{center}
	\caption{Numerical Experiment 2 (third-order nonlinear equation): Solutions in time (Left: exact solution, Right: approximate solution) for the $\varepsilon$-independent $u$ and $q$.}
\label{fig:graph5a}
\end{figure}

\begin{figure}[!h]
	\begin{center}
		\includegraphics[scale=0.25]{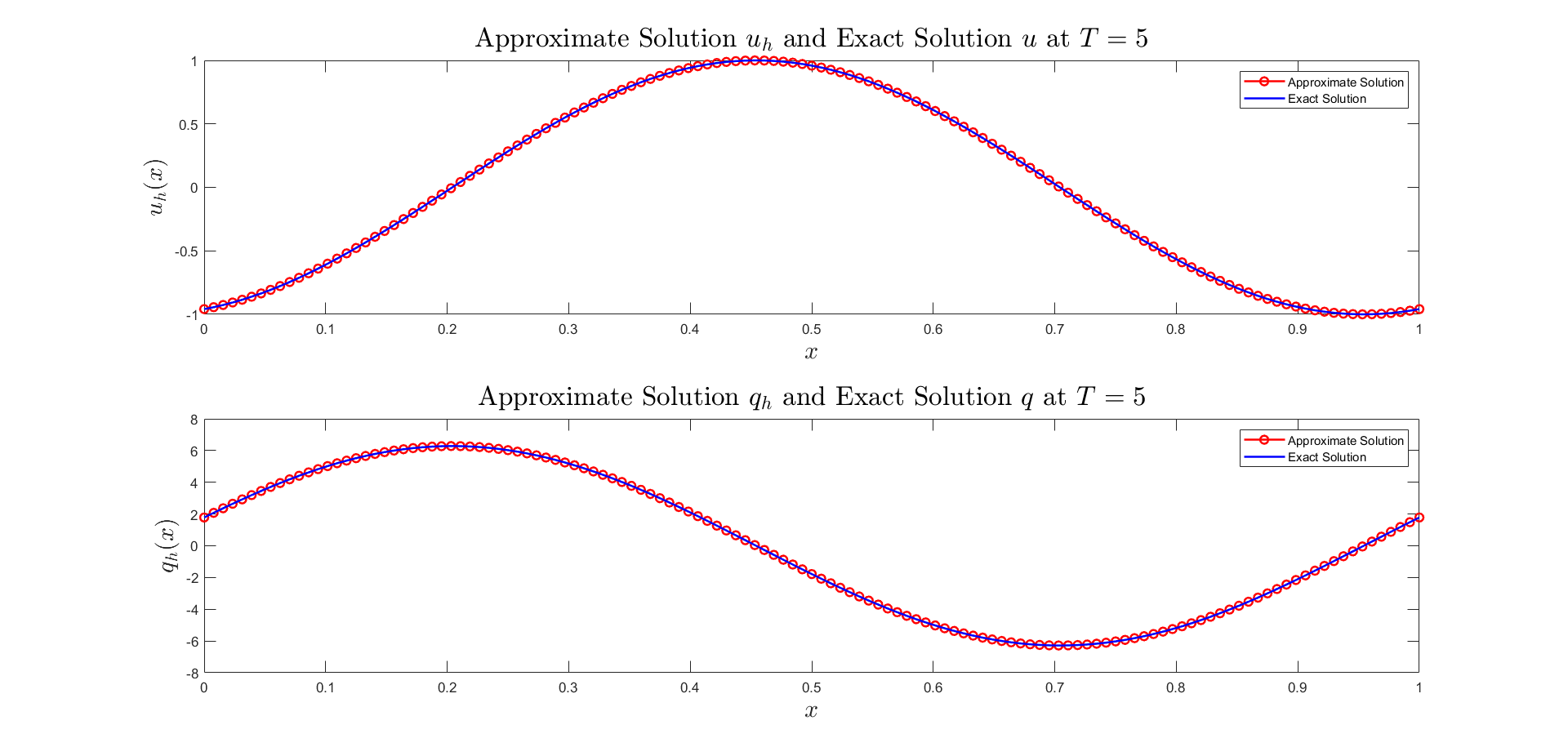}
	\end{center}
\caption{\dong{Numerical Experiment 2 (third-order nonlinear equation): Solutions at the final time $T=5$ (Top: $u$ and $u_h$, bottom: $q$ and $q_h$).}}
	\label{fig:graph5a_finaltime}
\end{figure}

\begin{figure}[!h]
\begin{center}
\includegraphics[scale=0.25]{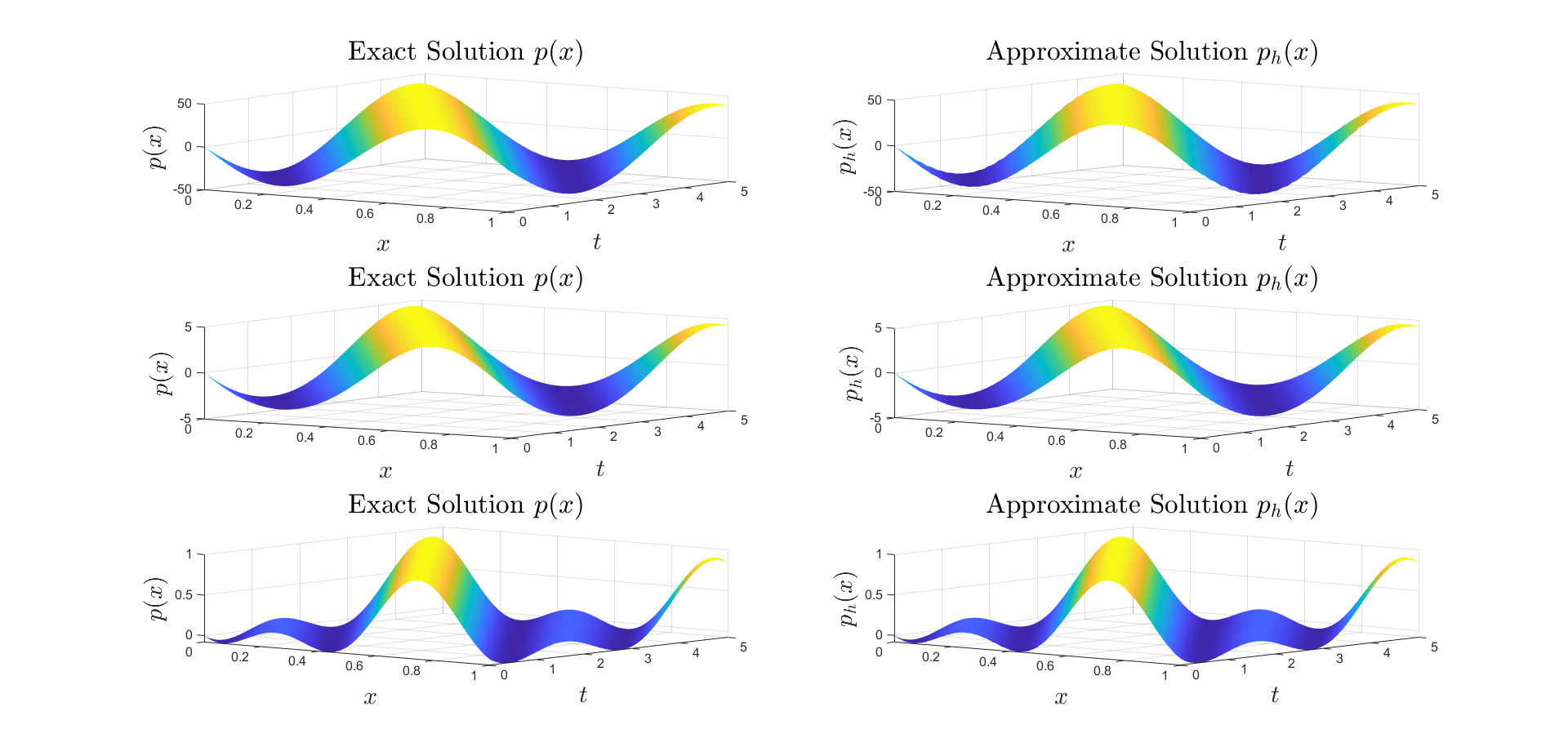}
\end{center}
\caption{{Numerical Experiment 2 (third-order nonlinear equation): Solution in time (Left: exact, Right: approximate, $\varepsilon = 1, 0.1, 0.01$ from top to bottom) for the $\varepsilon$-dependent $p$.}}
\label{fig:graph5b}
\end{figure}

\begin{figure}[!h]
\begin{center}
\includegraphics[scale=0.25]{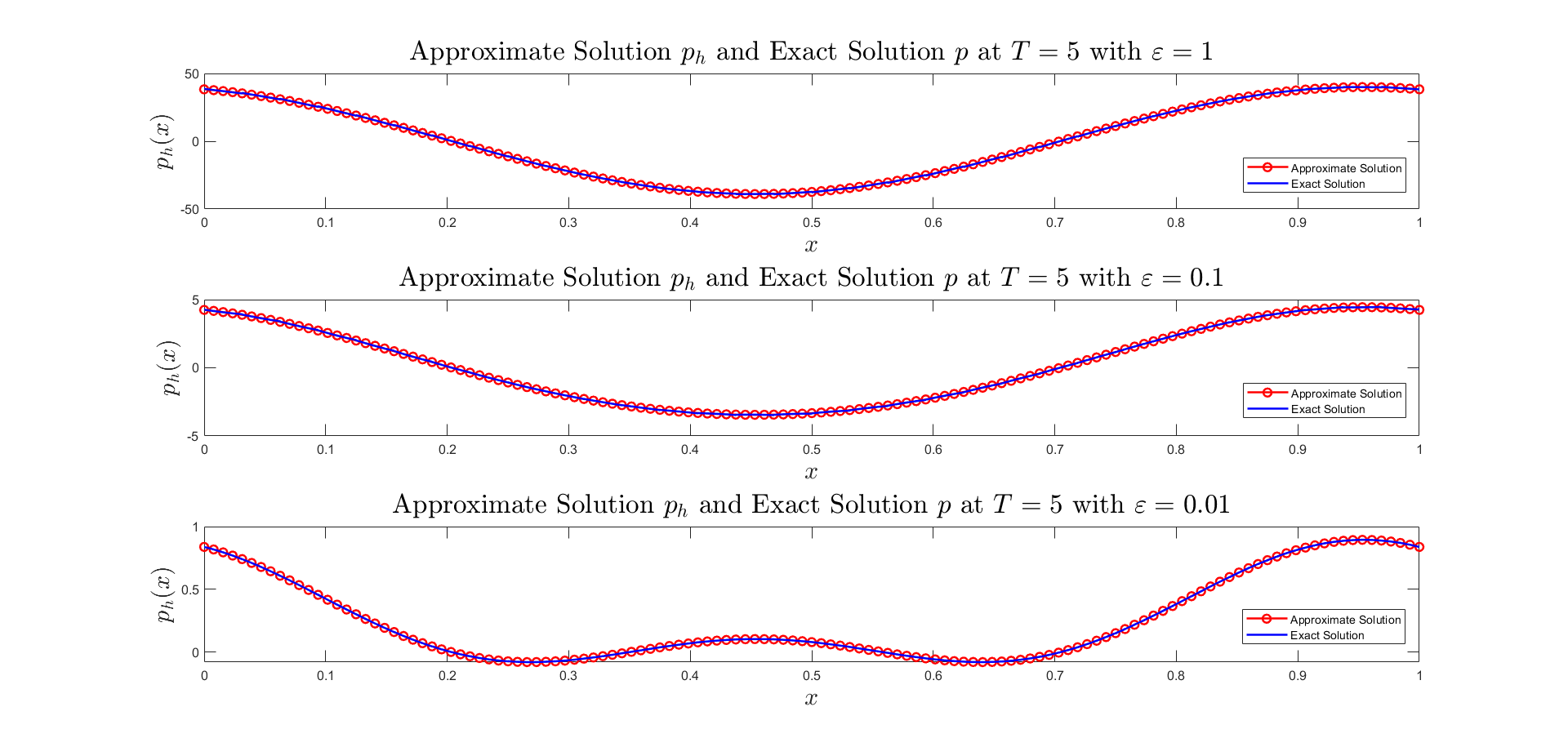}
\end{center}
\caption{\dong{Numerical Experiment 2 (third-order nonlinear equation): the $\varepsilon$-dependent solution $p$ and the approximate solution $p_h$ at the final time $T=5$ (with $\varepsilon = 1, 0.1, 0.01$ from top to bottom).}}
		\label{fig:graph5b_finaltime}
\end{figure}


Finally, we test the conservation properties of our DG scheme. We plot the Hamiltonian and the Energy of the numerical solutions for $t\in [0, 50]$  for different $\varepsilon$ in Figure \ref{fig:graph4}. \dong{The errors of the energy and Hamiltonian for different $\varepsilon$ are plotted in Figure \ref{fig:graph4_error_eps1}}.  We see that our method successfully conserves both Hamiltonian and energy. We note that, even though 
the energy and Hamiltonian are conserved for the KdV equations (i.e.\dong{,} the source term $g \equiv 0$), the manufactured solution of this particular test with a nonzero source term  happens to bear these properties as well and thus serves as an ideal test case. 

\begin{figure}[!h]
\begin{center}
\includegraphics[scale=0.25]{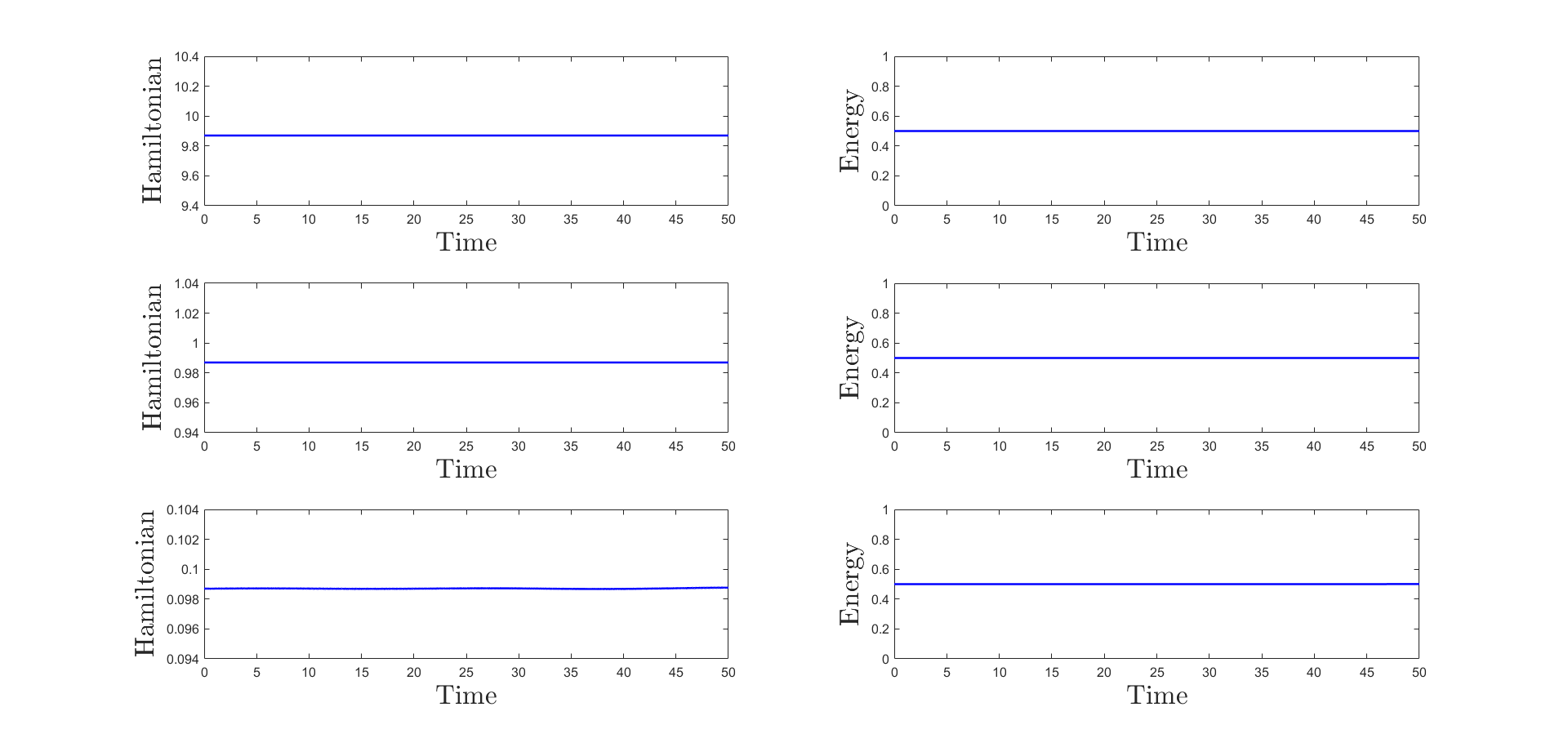}
\end{center}
\caption{ Numerical Experiment 2 (third-order nonlinear equation): Conservation of Hamiltonian (Left) and energy (Right)  when $\varepsilon$ = 1 (top), 0.1 (middle), and 0.01 (bottom).}
\label{fig:graph4}
\end{figure}

\begin{figure}[!h]
\begin{center}
\includegraphics[scale=0.25]{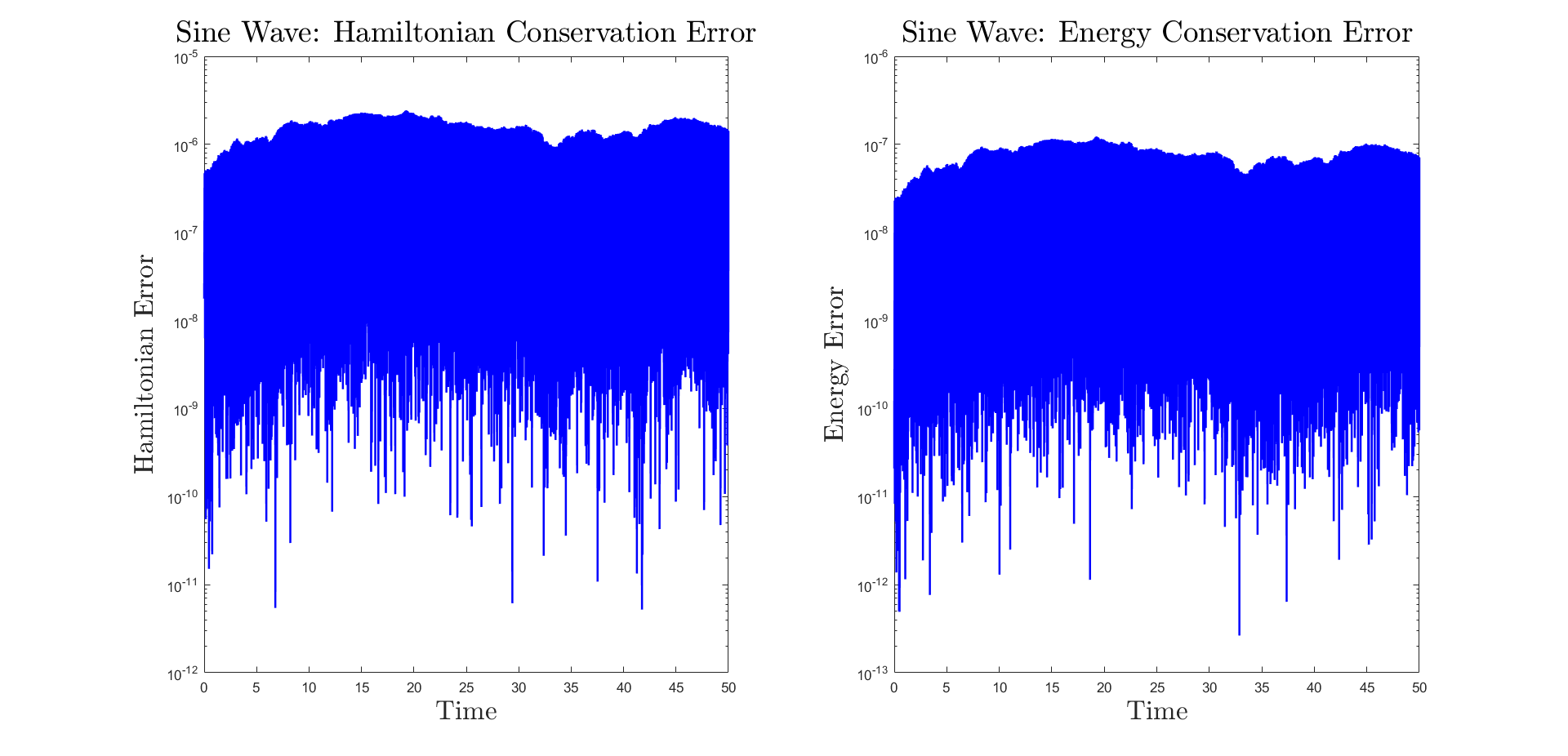}
\includegraphics[scale=0.25]{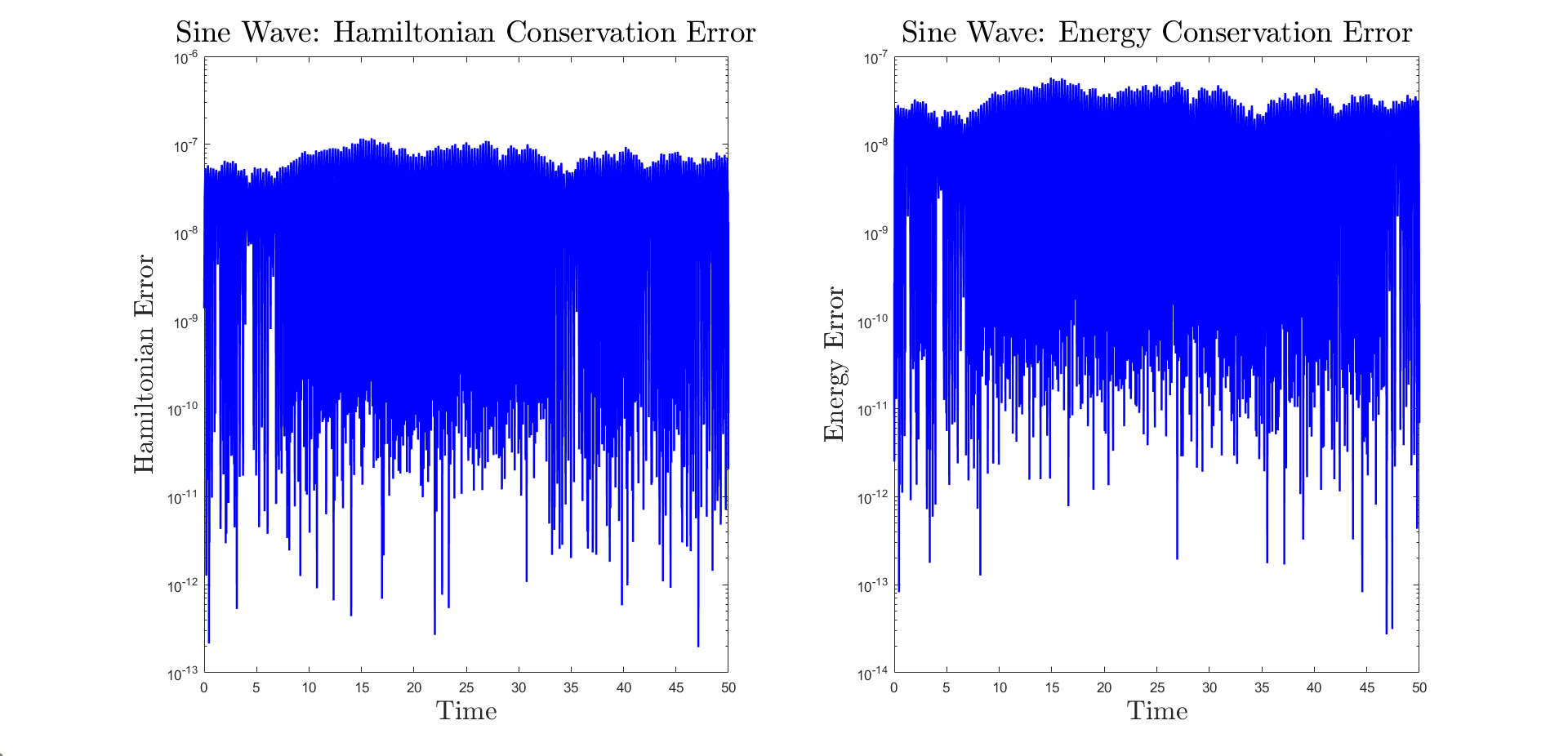}
\includegraphics[scale=0.25]{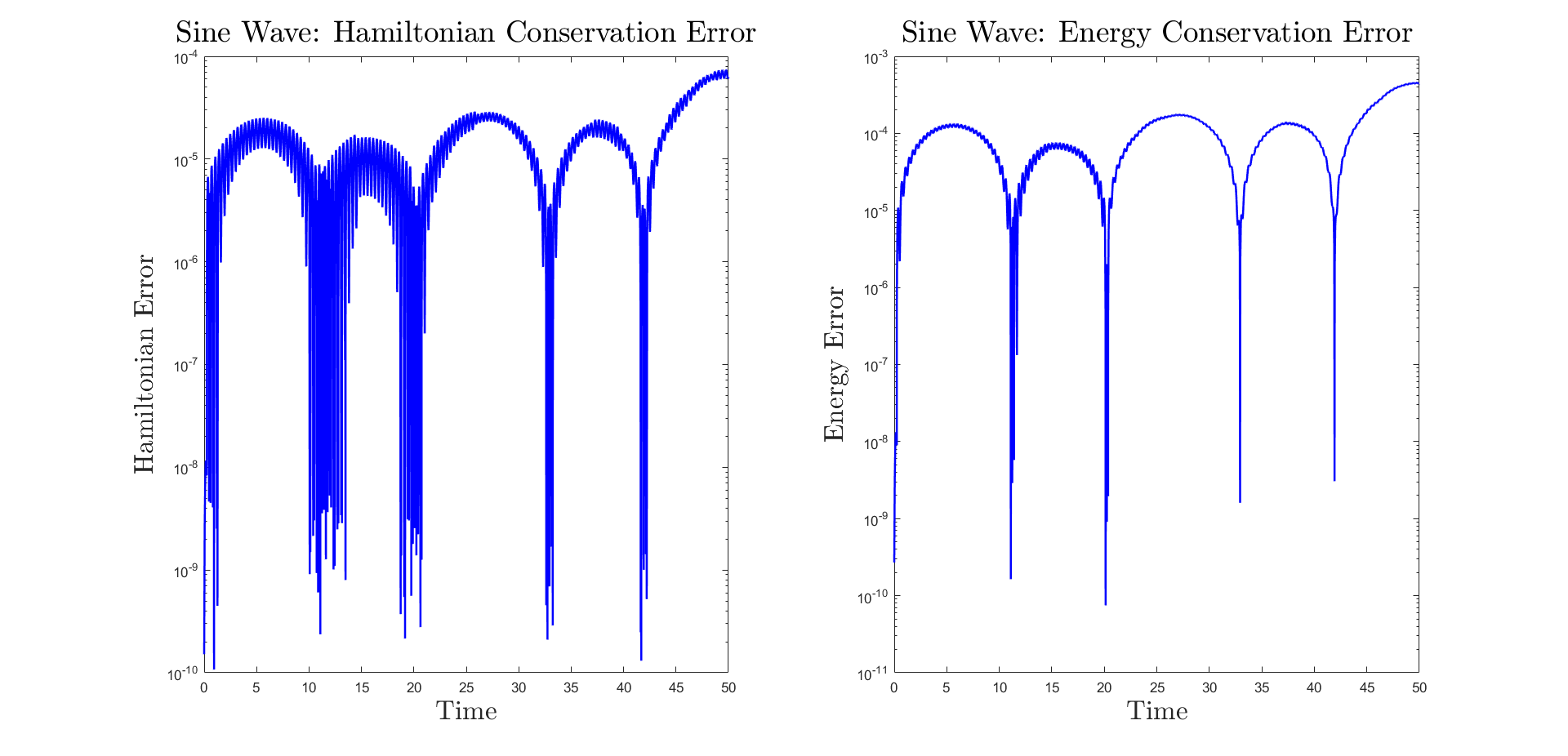}
\end{center}
\caption{\dong{Numerical Experiment 2 (third-order nonlinear equation): Errors of Hamiltonian (Left) and energy (Right) when $\varepsilon = 1$ (top), $0.1$ (middle) and $0.01$ (bottom).}}
\label{fig:graph4_error_eps1}
\end{figure}

\subsection{Numerical Experiment 3}
In this example, we test the KdV equation 
$$u_{t} + \varepsilon u_{xxx} + (f(u))_x = 0$$
with $\varepsilon = \frac{1}{24^2}$  and  $f(u) = \frac{u^2}{2}$. The domain is $\Omega = [0,1]$ and we are testing a cnoidal-wave solution 
 \[u(x,t) = Acn^2(z),\]
where $cn(z) = cn(z|m)$ is the Jacobi elliptic function with modulus $m = 0.9$, $z = 4K(x-vt-x_0)$, $A = 192m\varepsilon K(m)^2$, $v = 64\varepsilon (2m-1)K(m)^2$, and $K(m)=\int_{0}^{\frac{\pi}{2}} \frac{d\theta}{\sqrt{(1-msin^2\theta)}}$ is the Jacobi elliptic integral of the first kind; see \cite{abramowitz1970handbook}. The parameter $x_0$ is arbitrary, so we take it to be zero. The solution $u$ has a spatial period 1.

This benchmark problem has been tested for other conservative DG methods in \cite{bona2013conservative,KarakashianXing16,liu2016hamiltonian,zhang2019conservative}. Those methods conserve either the Hamiltonian or the energy of the solution, \dong{but not both.}

\dong{In Table \ref{fig:Table4}, we display the $L^2$ errors  of approximate solutions to $u, q$, and $p$ for $k=0, 1, 2$. The convergence orders are similar to those in the previous  numerical experiments.} In Figure \ref{fig:graph8}, we plot the exact solution and the approximate solution using polynomial degree $k=2$ over 32 intervals over the time period $t\in[0, 5]$. \dong{The snapshots of the exact and the approximation solutions at the final time $T=5$ are shown in Figure \ref{fig:graph8_finaltime}.}   We can see that the graphs of exact solution and the approximate solution match up well \dong{in both figures}.
Next, we compute the numerical solution using $k=2$ on 32 intervals for a longer time $T=50$. The graphs of the Hamiltonian and energy of the DG solution versus time are displayed in Figure \ref{fig:graph9}, \dong{and the errors of Hamiltonian and energy are plotted on the second row of Figure \ref{fig:graph9}}. We can see that both the Hamiltonian and the energy have been conserved during the whole time period.

\begin{table}
	\dong{
	\begin{tabular}{|l|c|c|c|c|c|c|c|}
		\hline
		\multirow{2}{*}{\textbf{k}}& \multirow{2}{*}{\textbf{N}}& \multicolumn{2}{c|}{\bo{$\mathbf{u_h}$}} &\multicolumn{2}{c|}{\bo{$\mathbf{q_h}$}} & \multicolumn{2}{c|}{\bo{$\mathbf{p_h}$}} \\ 
		\cline{3-8}
		&  &\textbf{$L_{2}$ Error}&\textbf{Order }&\textbf{$L_{2}$ Error}&\textbf{Order }&\textbf{$L_{2}$ Error}&\textbf{Order }\\\hline
		\multirow{4}{*}{0}&\textbf{8}& 5.44e-1&-& 8.11&-& 3.18e-1&-\\
		&\textbf{16}& 2.72e-1& 1.00& 4.56& 0.83& 2.42e-1& 0.40\\
		&\textbf{32}& 9.89e-2& 1.46& 1.65& 1.47& 8.80e-2& 1.46\\
		&\textbf{64}& 4.50e-2& 1.14& 7.69e-1& 1.10& 3.05e-2& 1.53\\
	    &\textbf{128}& 2.22e-2& 1.02& 3.87e-1& 0.99& 1.34e-2& 1.19\\\hline
		\multirow{4}{*}{1}&\textbf{8}& 1.26e-1&-&2.40 &-& 1.12e-1&-\\
		&\textbf{16}& 7.49e-2& 0.75& 2.82& -0.23& 1.51e-1& -0.43\\
		&\textbf{32}& 2.13e-2& 1.81& 1.41& 1.00& 7.23e-2& 1.06\\
		&\textbf{64}& 6.07e-3& 1.81& 7.41e-1& 0.93& 5.01e-2& 0.53\\
	    &\textbf{128}& 1.57e-3& 1.95& 3.74e-1& 0.99& 2.88e-1& 0.80\\\hline
		\multirow{4}{*}{2}&\textbf{8}& 1.18e-1&-& 4.70&-& 2.58e-1&-\\
		&\textbf{16}& 1.60e-2& 2.88& 7.24e-1& 2.70& 8.08e-2& 1.68\\
		&\textbf{32}& 2.71e-3& 2.56& 5.96e-2& 3.60& 1.15e-2& 2.85\\
		&\textbf{64}& 3.47e-4& 2.96& 6.15e-3& 3.28& 6.08e-4& 4.21\\
		\hline
	\end{tabular}
	\caption{Numerical Experiment 3 (classical KdV equation): Errors and convergence orders of $u_h$, $q_h$, and $p_h$}
	\label{fig:Table4}
}
\end{table}

\begin{figure}[!h]
\begin{center}
\includegraphics[scale=0.25]{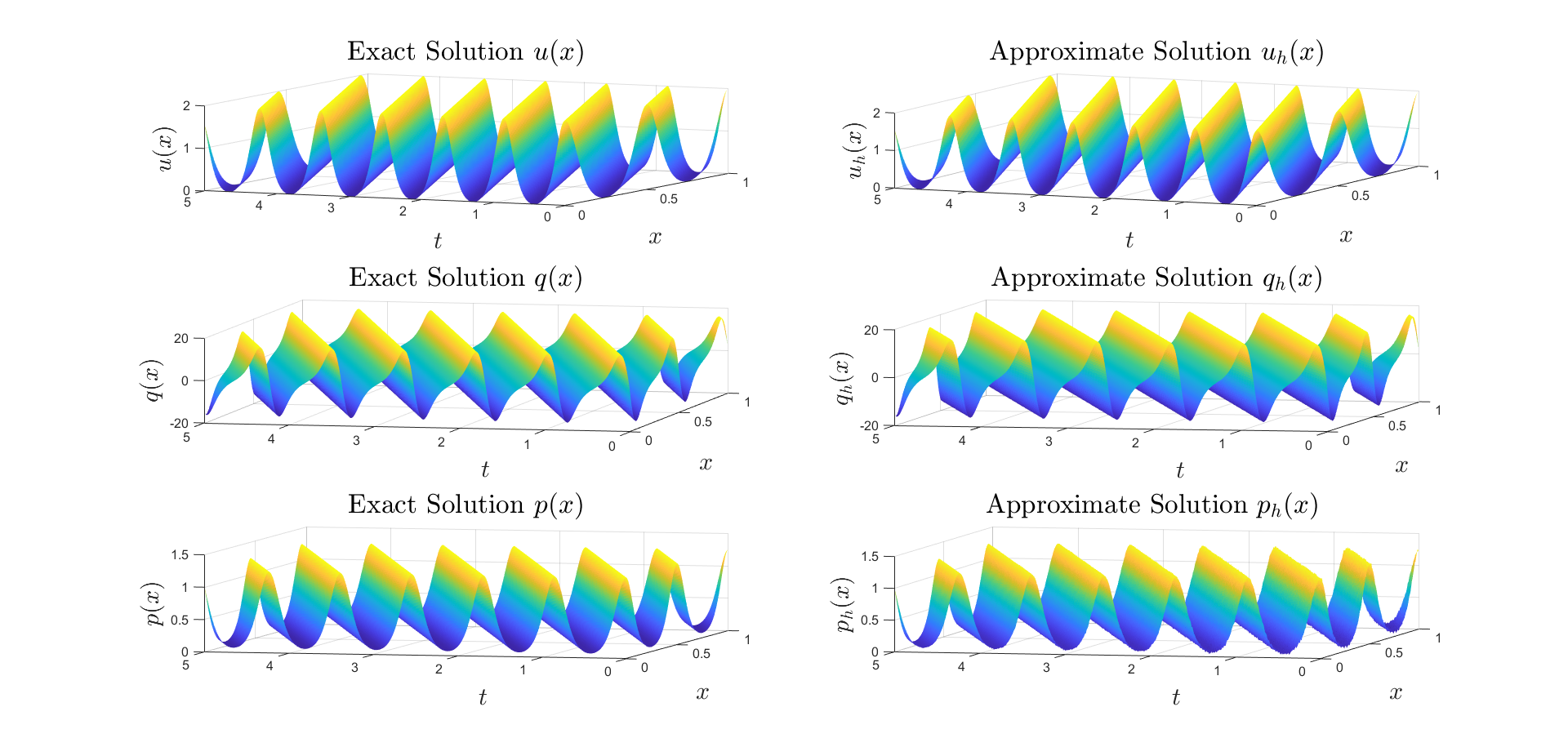}
\end{center}
\caption{Numerical Experiment \bo{3 (classical KdV equation)}: Solution in time (Left: exact, Right: approximate) for the Cnoidal Wave. }
\label{fig:graph8}
\end{figure}

\begin{figure}[!h]
\begin{center}
\includegraphics[scale=0.25]{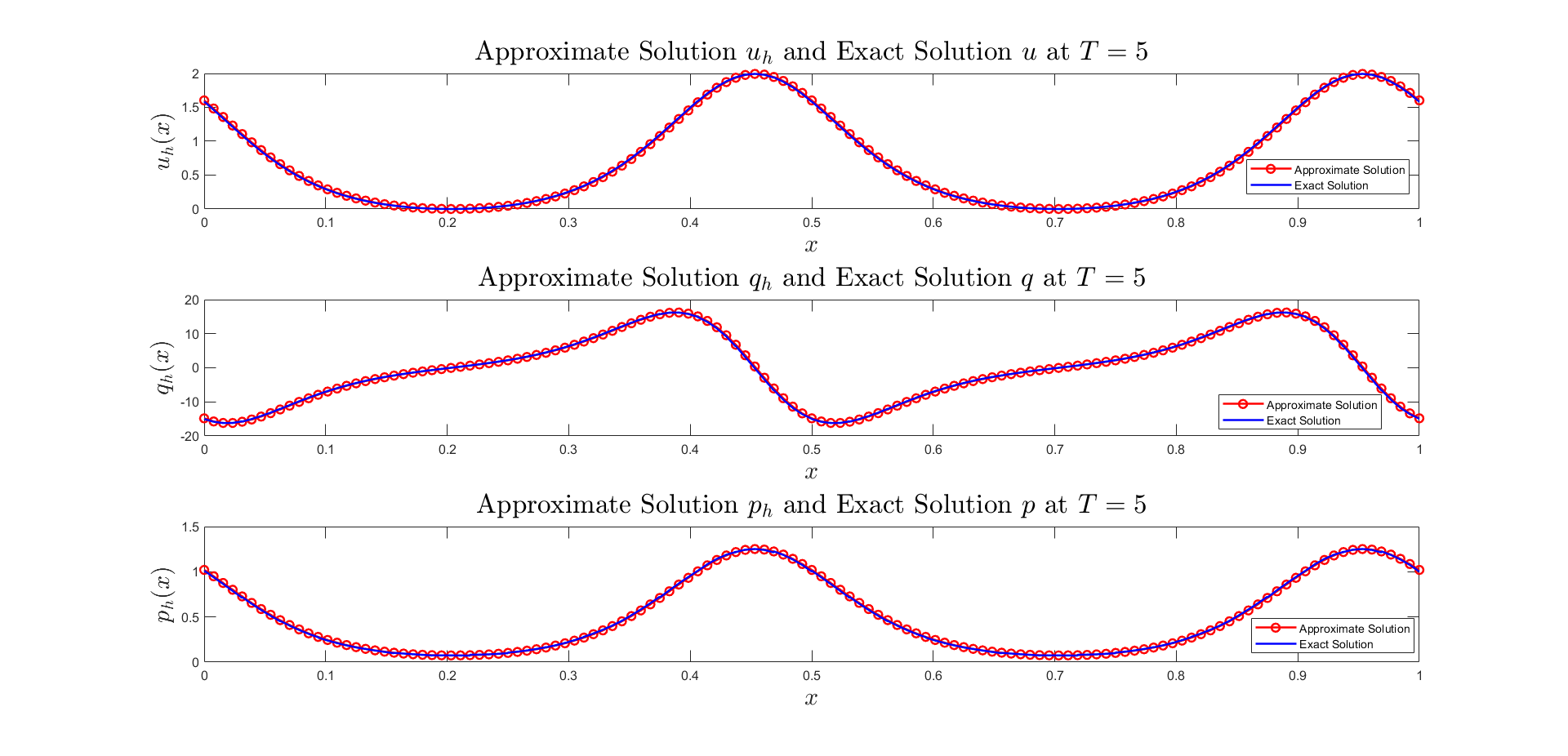}
\end{center}
\caption{\dong{Numerical Experiment 3 (classical KdV equation): Exact and approximate solutions at the final time $T=5$ for the Cnoidal Wave (Top: $u$ and $u_h$, middle: $q$ and $q_h$, bottom: $p$ and $p_h$).}}
\label{fig:graph8_finaltime}
\end{figure}

\begin{figure}[!h]
\begin{center}
\includegraphics[scale=0.25]{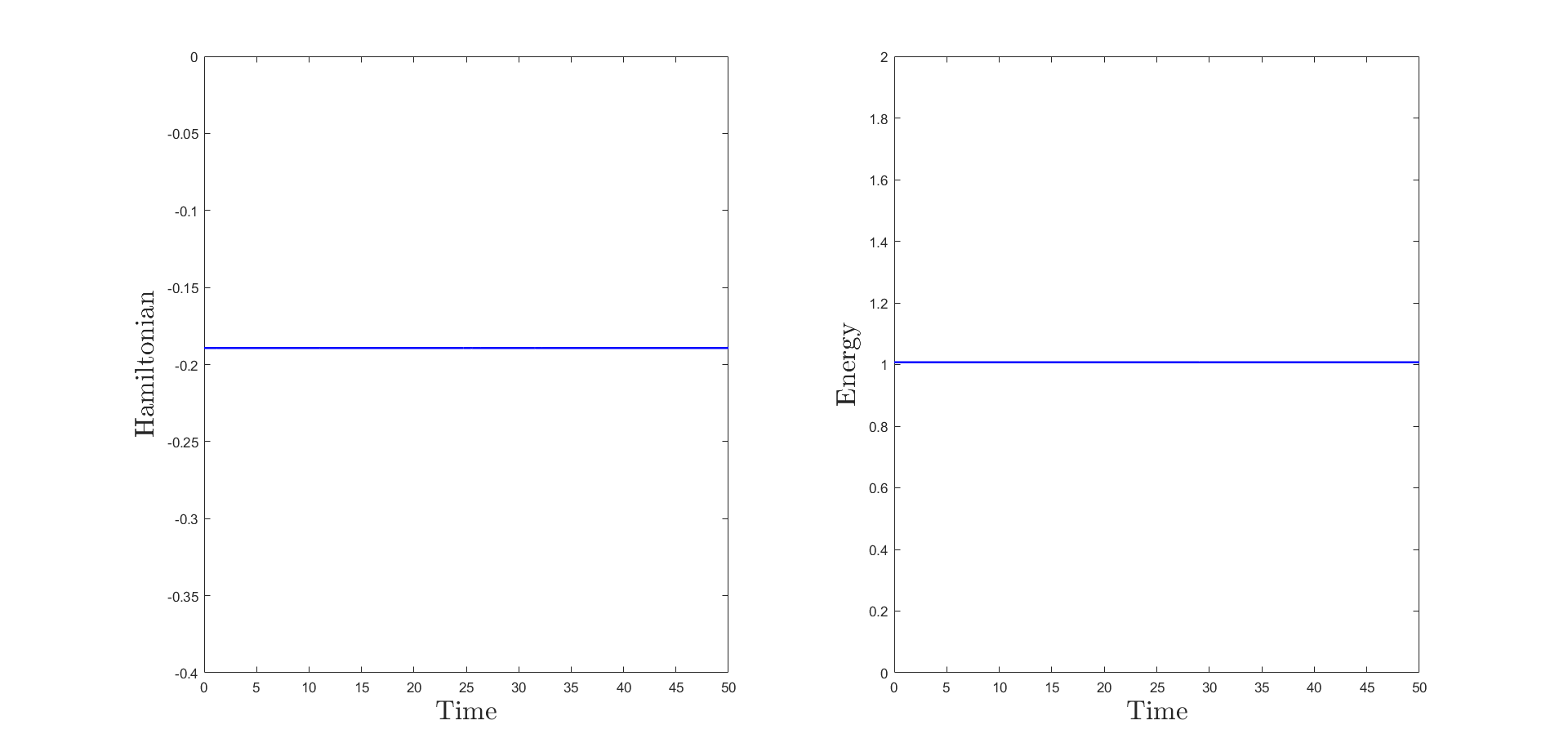}
\includegraphics[scale=0.25]{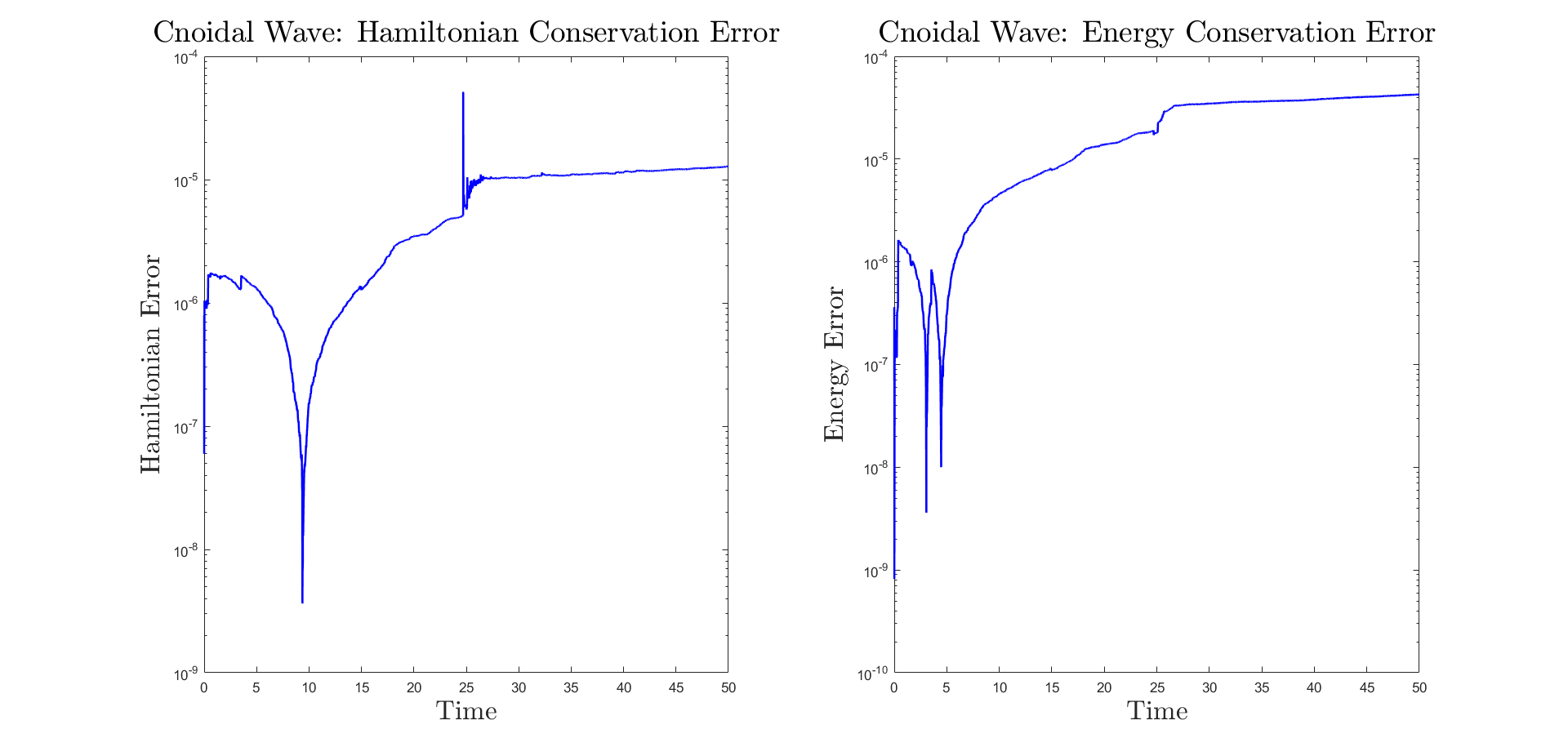}
\end{center}
\caption{Numerical Experiment 3 (classical KdV equation): Hamiltonian (Left) and energy (Right) conservation for the Cnoidal Wave. \dong{Shown on the second row are the corresponding errors.}}
\label{fig:graph9}
\end{figure}

\section{Concluding Remarks}
In this paper, we design and implement a \dong{new} conservative DG method for simulating solitary wave solutions to the generalized KdV equation. We prove that the method conserves the mass, energy and Hamiltonian of the solution. Numerical experiments confirm that our method does have the desirable conservation properties proved by our analysis. The convergence orders are also comparable to prior works by others. Future extensions include the investigation \dong{of} other choices of numerical fluxes, as well as applying the novel framework of devising \dong{new} conservative DG methods to other problems featuring physically interesting quantities that are conserved.

\appendix



\section{Implementation Details} \label{appendix:Implementation}
In the Appendix, we show how to rewrite the weak formulation of the DG method, \eqref{eq:scheme_time}, into the system \eqref{MatrixEqsF} for implementation using matrices and vectors. We start with the details on rewriting Eq. \eqref{eq:scheme_time1} into Eq. \eqref{eq:F1}. 
Assume that the interval $[-1, 1]$ is linearly mapped to the interval $I_i$ and the Legendre polynomial of degree $l$ on $[-1, 1]$ is correspondingly mapped to the polynomial $\phi_{i}^{l}(x)$ on the interval $I_{i}$ for $l = 0, \dong{\ldots},k$, and $i= 1, \dong{\ldots},N$. Then $u_h$ can be written as  
$u_{h}\rvert_{I_{i}} = \sum_{l=0}^{k} u_{i}^{l}(t)\phi_{i}^{l}(x)$, where $\{u^l_i (t)\}_{l=0}^k$ are degrees of freedom of $u_h$ on $I_i$  at time $t$.
Similar expansions are performed for $q_h$ and $p_h$.
Taking the test function $v=\phi_i^j(x)$ for $i=1,  \dong{\ldots}, N$ and $j=0, \dong{\ldots},k$ in \eqref{eq:scheme_time1} and using the definition of $\widehat{u_h}$, we get

%
\begin{align*}
 \textbf{M}[q]+\textbf{D}[u]+\textbf{A}[u] = 0,
\end{align*}
where $[u]=(u_1^0, \dong{\ldots},u_1^k, \dong{\ldots}, u_{N}^0, \dong{\ldots}, u_{N}^k)^T$ is the column vector that contains all the degrees of freedom of $u_h$, and $[q]$ and $[p]$ are the column vectors of degrees of freedom of $q_h$ and $p_h$, respectively. Here, the mass matrix \textbf{M} is block diagonal,
$$\textbf{M} =\dong{\textrm{diag}}(M^{I_{1}}, \dong{\ldots}, M^{I_{N}})$$ with components $$(M^{I_{i}})_{lj} = \int_{I_{i}} \phi_{i}^{l}(x) \phi_{i}^{j}(x) dx  $$
 for $j,l = 0,\dong{\ldots},k$ and $i = 1, \dong{\ldots}, N$. 
The stiffness matrix \textbf{D} is also block diagonal,
$$\textbf{D} =\dong{\textrm{diag}}(D^{I_{1}}, \dong{\ldots}, D^{I_{N}})$$  with components $$(D^{I_{i}})_{lj} = \int_{I_{i}} \phi_{i}^{l}(x) (\phi_{i}^{j})_{x}(x) dx$$ for $l,j = 0, \dong{\ldots}, k$ and $i = 1, \dong{\ldots}, N$. The matrix \textbf{A} is associated with the average flux in $\widehat{u_h}$. Note that a basis function on an interval $I_i$ only communicates with those on $I_i$ or on the two neighboring intervals $I_{i-1}$ and $I_{i+1}$. So the matrix \dong{\textbf{A}} is sparse and block diagonal. So are the matrices \textbf{D} and \textbf{M}. This is one of the advantages of DG methods which use local basis functions. 
Indeed, \textbf{A} is nearly block tridiagonal except the first and the last block rows. The three blocks used to assemble \textbf{A} have components as follows 
\begin{align*}
    (\textbf{A}_{I}^{-})_{jl} = \frac{(-1)^j}{2}, \quad  (\textbf{A}_{I}^{0})_{jl} = \frac{(-1)^{l+j}-1}{2}, \quad  (\textbf{A}_{I}^{+})_{jl} = -\frac{(-1)^l}{2} 
\end{align*}
for $j,l=0, \dong{\ldots},k$ and $i=1, \dong{\ldots}, N$.

Next, we rewrite the Eq. \eqref{eq:scheme_time2} into \eqref{eq:F2} in a similar way. The main difference lies in the term $\langle \dong{\widehat{q}_h},zn\rangle$. Using the definition of $\dong{\widehat{q}_h}$, we can rewrite this term as
\begin{align*}
    \langle\dong{\widehat{q}_h}, zn\rangle  &=  \langle\{q_{h}\}, z n \rangle  +  \tau_{qu}\langle[\![u_{h}]\!], z n \rangle.
\end{align*}
For the first term on the right hand side involving $\{q_h\}$, we can rewrite it as $\textbf{A} [q]$ using the average flux matrix \textbf{A}. For the second term that involves $\jmp{u_h}$, taking $z=\phi_i^l$, we have 
\begin{align*}
 \langle[\![u_{h}]\!], \phi_i^l n \rangle&= [\![u_{h}]\!](x_{i+1}) \phi_{i}^{l}(x_{i+1}^{-}) -  [\![u_{h}]\!](x_i)\phi_{i}^{l}(x_{i}^{+}) \\
 & = \bigg(\sum_{j=0}^{k} u_{i}^{j} - \sum_{j=0}^{k} u_{i+1}^{j}(-1)^{j}\bigg)  - \bigg(\sum_{j=0}^{k} u_{i-1}^{j}- \sum_{j=0}^{k} u_{i}^{j}(-1)^{j}\bigg) (-1)^{l}\\
 &=(-1)^{l+1}\sum_{j=0}^{k} u_{i-1}^{j}
  +(1+(-1)^{k+l})\sum_{j=0}^{k} u_{i}^{j}
  +(-1)^{j+1}\sum_{j=0}^{k} u_{i+1}^{j}
\end{align*}
for $ i = 1, \dong{\ldots}, N$, $l,j = 0, \dong{\ldots}, k$.
Note that for each $i$, the expression above only uses the interval $I_i$, the one before it, and the one after it. So we can write the term $\langle\jmp{u_h}, zn\rangle$ as
$$\langle\jmp{u_h}, zn\rangle=\textbf{J} [u],$$
 where \textbf{J} is a nearly block tridiagonal matrix except the first and last block rows. Now the Eq. \eqref{eq:scheme_time2} can be written as \eqref{eq:F2}.

To rewrite Eq. \eqref{eq:scheme_time3} as \eqref{eq:F3}, we just need to approximate the $(u_t, w)$ term by $\mathbf{M}([u]-[\bar{u}])/(\frac{1}{2}\Delta t)$. The rest terms are handled in a similar way to what we described above for \eqref{eq:scheme_time1} and \eqref{eq:scheme_time2}.

Rewriting the equations \eqref{eq:tau_pu1} and \eqref{eq:tau_qu1} that enforce the conservation of  Energy and Hamiltonian into \eqref{eq:F4} and \eqref{eq:F5} is \dong{straightforward}. We just need to use the notation $V_f = \sum_{i = 1}^{N}[\![V(u_{h})]\!] - \{\Pi f(u_{h})\}[\![u_{h}]\!](x_{i})$ and  $\eta_(\xi,\nu) = \sum_{i = 1}^{N}\jmp{\xi}\jmp{\nu}(x_i)$ for $\xi, \nu = u_h, q_h, $ or $p_h$.  

\bibliographystyle{siamplain}
\bibliography{Citationskdv}

\begin{thebibliography}{10}

\bibitem{abramowitz1970handbook}
{\sc M.~Abramowitz and I.~A. Stegun}, {\em Handbook of mathematical functions
  with formulas, graphs, and mathematical tables}, vol.~55, US Government
  printing office, 1970.

\bibitem{ArnoldBrezziCockburnMarini02}
{\sc D.~N. Arnold, F.~Brezzi, B.~Cockburn, and L.~D. Marini}, {\em Unified
  analysis of discontinuous {G}alerkin methods for elliptic problems}, SIAM J.
  Numer. Anal., 39 (2002), pp.~1749--1779.

\bibitem{ArnoldWinther82}
{\sc D.~N. Arnold and R.~Winther}, {\em A superconvergent finite element method
  for the {K}orteweg-de {V}ries equation}, Mathematics of Computation, 38
  (1982), pp.~23--36.

\bibitem{BakerDougalisKarakashian83}
{\sc G.~A. Baker, V.~A. Dougalis, and O.~A. Karakashian}, {\em Convergence of
  {G}alerkin approximations for the {K}orteweg-de {V}ries equation},
  Mathematics of Computation, 40 (1983), pp.~419--433.

\bibitem{bona2013conservative}
{\sc J.~Bona, H.~Chen, O.~Karakashian, and Y.~Xing}, {\em Conservative,
  discontinuous {G}alerkin methods for the generalized {K}orteweg-de {V}ries
  equation}, Mathematics of Computation, 82 (2013), pp.~1401--1432.

\bibitem{BonaDougalisKarakashianMcKinney90}
{\sc J.~Bona, V.~Dougalis, O.~Karakashian, and W.~McKinney}, {\em Fully
  discrete methods with grid refinement for the generalized {K}orteweg-de
  {V}ries equation}, In M. Shearer, editor, Proceedings of the workshop on
  viscous and numerical approximations of shock waves, N.C. State University,
  (1990), pp.~117--124.

\bibitem{chen2016new}
{\sc Y.~Chen, B.~Cockburn, and B.~Dong}, {\em A new discontinuous {G}alerkin
  method, conserving the discrete ${H}^2$-norm, for third-order linear
  equations in one space dimension}, IMA Journal of Numerical Analysis, 36
  (2016), pp.~1570--1598.

\bibitem{ChengShu08}
{\sc Y.~Cheng and C.-W. Shu}, {\em A discontinuous {G}alerkin finite element
  method for time dependent partial differential equations with higher order
  derivatives}, Mathematics of Computation, 77 (2008), pp.~699--730.

\bibitem{DekkerVerwer1984}
{\sc K.~Dekker and J.~G. Verwer}, {\em Stability of {R}unge-{K}utta methods for
  stiff nonlinear differential equations}, vol.~2 of CWI Monographs,
  North-Holland Publishing Co., Amsterdam, 1984.

\bibitem{FornbergWhitham78}
{\sc B.~Fornberg and G.~Whitham}, {\em A numerical and theoretical study of
  certain nonlinear wave phenomena}, Philos. Trans. R. Soc. Lond. Ser. A, Math.
  Phys. Sci., 289 (1978), pp.~373--404.

\bibitem{GardnerMorikawa60}
{\sc C.~S. Gardner and G.~K. Morikawa}, {\em Courant Inst. Math. Sci. Res. Rep.
  NYO-9082}, New York University, 1960.

\bibitem{Goda77}
{\sc K.~Goda}, {\em Numerical studies on recurrence of the {K}orteweg-de
  {V}ries equation}, Journal of the Physical Society of Japan, 42 (1977),
  pp.~1040--1046.

\bibitem{GuoShen01}
{\sc B.~Guo and J.~She}, {\em On spectral approximations using modified
  {L}egendre rational functions: Application to the {K}orteweg-de {V}ries
  equation on the half line}, Indiana University Mathematics Journal, 50
  (2001), pp.~181--204.

\bibitem{HammackSegur74}
{\sc J.~L. Hammack and H.~Segur}, {\em The {K}orteweg-de {V}ries equation and
  water waves. part 2. comparison with experiments}, Journal of Fluid
  Mechanics, 65 (1974), pp.~289--314.

\bibitem{HammackSegur78}
{\sc J.~L. Hammack and H.~Segur}, {\em The {K}orteweg-de {V}ries equation and
  water waves. part 3. oscillatory waves}, Journal of Fluid Mechanics, 84
  (1978), pp.~337--358.

\bibitem{HelfrichWhitehead89}
{\sc K.~R. Helfrich and J.~A. Whitehead}, {\em Solitary waves on conduits of
  buoyant fluid in a more viscous fluid}, Geophysical and Astrophysical Fluid
  Dynamics, 51 (1989), pp.~35--52.

\bibitem{Holden1999}
{\sc H.~Holden, K.~H. Karlsen, and N.~H. Risebro}, {\em Operator splitting
  methods for generalized {K}orteweg–de {V}ries equations}, Journal of
  Computational Physics, 153 (1999), pp.~203--222.

\bibitem{Tao11}
{\sc H.~Holden, K.~H. Karlsen, N.~H. Risebro, and T.~Tao}, {\em Operator
  splitting for the {K}d{V} equation}, Mathematics of Computation, 80 (2011),
  pp.~821--846.

\bibitem{HuangSloan92}
{\sc W.~Huang and D.~M. Sloan}, {\em The pseudospectral method for third-order
  differential equations}, SIAM Journal on Numerical Analysis, 29 (1992),
  pp.~1626--1647.

\bibitem{HuffordXing14}
{\sc C.~Hufford and Y.~Xing}, {\em Superconvergence of the local discontinuous
  {G}alerkin method for the linearized {K}orteweg-de {V}ries equation}, Journal
  of Computational and Applied Mathematics, 255 (2014), pp.~441--455.

\bibitem{KarakashianXing16}
{\sc O.~Karakashian and Y.~Xing}, {\em A posteriori error estimates for
  conservative local discontinuous {G}alerkin methods for the generalized
  {K}orteweg-de {V}ries equation}, Communications in Computational Physics, 20
  (2016), pp.~250--278.

\bibitem{Kluwick83}
{\sc A.~Kluwick}, {\em Small-amplitude finite-rate waves in suspensions of
  particles in fluids}, ZAMM - Journal of Applied Mathematics and Mechanics /
  Zeitschrift für Angewandte Mathematik und Mechanik, 63 (1983), pp.~161--171.

\bibitem{LiVisbal06}
{\sc J.~Li and M.~Visbal}, {\em High-order compact schemes for nonlinear
  dispersive waves}, Journal of Scientific Computing, 26 (2006), pp.~1--23.

\bibitem{liu2016hamiltonian}
{\sc H.~Liu and N.~Yi}, {\em A {H}amiltonian preserving discontinuous
  {G}alerkin method for the generalized {K}orteweg-de {V}ries equation},
  Journal of Computational Physics, 321 (2016), pp.~776--796.

\bibitem{MaSun01}
{\sc H.~Ma and W.~Sun}, {\em Optimal error estimates of the
  {L}egendre-{P}etrov-{G}alerkin method for the {K}orteweg--de {V}ries
  equation}, SIAM Journal on Numerical Analysis, 39 (2001), pp.~1380--1394.

\bibitem{MartelMerle02}
{\sc Y.~Martel and F.~Merle}, {\em Stability of blow-up profile and lower
  bounds on the blow up rate for the critical generalized {K}d{V} equation},
  Annals of Mathematics, 155 (2002), pp.~235--280.

\bibitem{Merle01}
{\sc F.~Merle}, {\em Existence of blow-up solutions in the energy space for the
  critical generalized {K}d{V} equation}, Journal of the American Mathematical
  Society, 14 (2001), pp.~666--678.

\bibitem{Osborne90}
{\sc A.~R. Osborne}, {\em Chapter 7, in A. P. Fordy (ed.), Soliton Theory: A
  Survey of Results}, Manchester Univ. Press, 1990.

\bibitem{SANZSERNA1981}
{\sc J.~Sanz-Serna and I.~Christie}, {\em {P}etrov-{G}alerkin methods for
  nonlinear dispersive waves}, Journal of Computational Physics, 39 (1981),
  pp.~94--102,
  \url{https://doi.org/https://doi.org/10.1016/0021-9991(81)90138-8},
  \url{https://www.sciencedirect.com/science/article/pii/0021999181901388}.

\bibitem{shu2009discontinuous}
{\sc C.-W. Shu}, {\em Discontinuous {G}alerkin methods: general approach and
  stability}, Numerical solutions of partial differential equations, 201
  (2009).

\bibitem{vanWijngaarden72}
{\sc L.~van Wijngaarden}, {\em One-dimensional flow of liquids containing small
  gas bubbles}, Annual review of fluid mechanics, 4 (1972), pp.~369--396,
  \url{https://doi.org/10.1146/annurev.fl.04.010172.002101}.

\bibitem{Vliegenthart71}
{\sc A.~C. Vliegenthart}, {\em On finite-difference methods for the
  {K}orteweg-de {V}ries equation}, Journal of Engineering Mathematics, 5
  (1971), pp.~137--155.

\bibitem{Winther80}
{\sc R.~Winther}, {\em A conservative finite element method for the
  {K}orteweg-de {V}ries equation}, Mathematics of Computation, 34 (1980),
  pp.~23--23, \url{https://doi.org/10.1090/S0025-5718-1980-0551289-5}.

\bibitem{xu2005local}
{\sc Y.~Xu and C.-W. Shu}, {\em Local discontinuous {G}alerkin methods for two
  classes of two-dimensional nonlinear wave equations}, Physica D: Nonlinear
  Phenomena, 208 (2005), pp.~21--58.

\bibitem{XuShu12}
{\sc Y.~Xu and C.-W. Shu}, {\em Optimal error estimates of the semidiscrete
  local discontinuous {G}alerkin methods for high order wave equations}, SIAM
  Journal on Numerical Analysis, 50 (2012), pp.~79--104.

\bibitem{YanShu02}
{\sc J.~Yan and C.-W. Shu}, {\em Local discontinuous {G}alerkin methods for
  partial differential equations with higher order derivatives}, Journal of
  Scientific Computing, 17 (2002), pp.~27--47.

\bibitem{yi2013direct}
{\sc N.~Yi, Y.~Huang, and H.~Liu}, {\em A direct discontinuous {G}alerkin
  method for the generalized {K}orteweg-de {V}ries equation: energy
  conservation and boundary effect}, Journal of Computational Physics, 242
  (2013), pp.~351--366.

\bibitem{zhang2019conservative}
{\sc Q.~Zhang and Y.~Xia}, {\em Conservative and dissipative local
  discontinuous {G}alerkin methods for {K}orteweg-de {V}ries type equations},
  Communications in Computational Physics, 25 (2019), pp.~532--563.

\end{thebibliography}

\end{document}